\theoremstyle{plain}
\newtheorem{theorem}{Theorem}[section]
\newtheorem{lemma}[theorem]{Lemma}
\newtheorem{remark}[theorem]{Remark}
\numberwithin{equation}{section}
\newcommand{\Z}{\mathbb{Z}}
\newcommand{\N}{\mathbb N}
\numberwithin{figure}{section}
\numberwithin{table}{section}
\title[A FSG scheme for fCH equation]{A convergent Fourier spectral Galerkin method for the fractional Camassa-Holm equation}
\author{Mukul Dwivedi \and Andreas Rupp}
\address{Department of Mathematics, Saarland University, Saarbrücken, Germany}
\email{\{mukul.dwivedi;andreas.rupp\}@uni-saarland.de}
\subjclass[2020]{ 65M18, 65M13, 35L17}
\keywords{Camassa-Holm equation, Fourier spectral Galerkin method, Fractional Laplacian, Spectral convergence}
\begin{document}

% =============================================================
\begin{abstract}
We analyze a Fourier spectral Galerkin method for the fractional Camassa-Holm (fCH) equation involving a fractional Laplacian of exponent $\alpha \in [1,2]$ with periodic boundary conditions. The semi-discrete scheme preserves both mass and energy invariants of the fCH equation. For the fractional Benjamin-Bona-Mahony reduction, we establish existence and uniqueness of semi-discrete solutions and prove strong convergence to the unique solution in $ C^1([0,  T];H^{\alpha}_{\mathrm{per}}(I))$ for given $T>0$. For the general fCH equation, we demonstrate spectral accuracy in spatial discretization with optimal error estimates $\mathcal{O}(N^{-r})$ for initial data $u_0 \in H^r(I)$ with $r \geq \alpha + 2$ and exponential convergence $\mathcal{O}(e^{-cN})$ for smooth solutions.  Numerical experiments validate orbital stability of solitary waves achieving optimal convergence, confirming theoretical findings.
\end{abstract}

\maketitle
\section{Introduction}
The classical Camassa–Holm (CH) equation is a nonlinear dispersive partial differential equation originally derived to describe the unidirectional propagation of shallow water waves under gravity \cite{camassa1993integrable, fisher1999camassa, constantin2002stability}. In the shallow water regime, it models the balance between nonlinear advection and dispersion, and it provides a more faithful long-wave approximation than the standard Korteweg–de Vries (KdV) equation \cite{bona1975initial,sjoberg1970korteweg} in certain aspects. Notably, the CH equation can support solitary wave solutions (including peaked solitons) and allows for wave-breaking phenomena where wave slopes blow up in finite time \cite{camassa1993integrable,rodriguez2001cauchy}. To capture more complex dispersive behaviors observed in various media, one considers a fractional generalization of the CH equation that incorporates {\it nonlocal} dispersion \cite{mutlubacs2019cauchy}. In this fractional Camassa–Holm (fCH) model, the usual second-order spatial derivative is replaced by a fractional Laplacian operator $(-\Delta)^{\alpha/2}$, which accounts for long-range interactions and {\it anomalous dispersion} effects \cite{erbay2015derivation}. Moreover, the fCH framework often provides a more accurate description of wave dynamics than fractional KdV models \cite{dwivedi2024fully,dwivedi2024stability} because it retains the dual-balance structure between nonlinear advection and nonlocal dispersion, rather than being purely dispersive. This modification is physically motivated by situations such as wave propagation in nonlinear elastic media and complex fluids, where heterogeneities or memory effects give rise to dispersive characteristics that deviate from classical models. The resulting fCH equation extends the classical CH (recovered when $\alpha=2$) to a family of equations capable of describing wave dynamics in regimes where standard dispersive approximations (like KdV or the classical CH equation) become insufficient. Studying such fractional CH-type models is important both in theory and in practice: the nonlocal terms pose new analytical challenges in understanding well-posedness, stability, and wave-breaking behavior \cite{mao2021cauchy, fan2021cauchy, mutlubacs2019cauchy}, and they require advanced numerical techniques to simulate accurately.

In this paper, we consider the fractional Camassa-Holm (fCH) equation, for $\alpha \in [1,2]$, $T>0$, and $I:=[-\pi,\pi]$, given by
\begin{equation}\label{eq:fch}
\begin{aligned}
&u_t + f(u)_x + \mathcal{D}^\alpha u_t = -\kappa_2 \left[2 \mathcal{D}^\alpha (u u_x) + u \mathcal{D}^\alpha u_x \right], \quad \;\; x \in I, \;\; t \in (0,T],\\
  &u(x,0) = u_0(x), \quad x\in I,
\end{aligned}
\end{equation}
where $f(u) = \kappa_1u + \frac{3}{2}\gamma u^2$, $u_0$ is the given initial datum, and  \(u(x,t)\) denotes the $2\pi$-periodic real-valued fluid velocity. Here \(\kappa_1, \kappa_2 \in \mathbb{R}\) and \(\gamma > 0\) are physical parameters. The fractional Laplacian \(\mathcal{D}^\alpha := (-\Delta)^{\alpha/2}\) for \(\alpha > 0\) is defined via its Fourier series \cite{klein2024numerical}, i.e.,
\begin{equation}\label{eq:frac_def}
\widehat{\mathcal{D}^\alpha u}(k) = |k|^\alpha \hat{u}(k),
\end{equation}
where \(\hat{u}(k) = \frac{1}{2\pi} \int_I u(x) e^{-ikx}  dx\) is the \(k\)-th Fourier coefficient.

The formulation \eqref{eq:fch} generalizes the classical Camassa-Holm (CH) equation \cite{camassa1993integrable}
\begin{equation}\label{eq:ch}
u_t + 2\omega u_x + 3 u u_x - u_{xxt} = 2u_xu_{xx} + uu_{xxx}, \quad x \in I, \;\; t \in (0,T],
\end{equation}
subject to periodic boundary and initial conditions, which corresponds to the parameter choice \(\kappa_1 = 2\omega\), \(\gamma = 1\), \(\alpha = 2\), \(\kappa_2 = 1/3\). With \(\omega > 0\), the CH equation \eqref{eq:ch} describes the propagation of unidirectional gravitational waves in the shallow water approximation \cite{constantin2002stability,constantin1998global,fisher1999camassa}. The equation possesses remarkable mathematical properties including complete integrability \cite{fisher1999camassa}, finite-time wave breaking \cite{constantin1998global}, and peakon solutions \cite{rodriguez2001cauchy} of the form \(u(x,t) = ce^{-|x-ct|}\), for real constant $c$, when \(\omega = 0\). 

We also study a special case of \eqref{eq:fch} when \(\kappa_1 = 1\), \(\gamma = 1/3\), and \(\kappa_2 = 0\), which yields the $2\pi$-periodic fractional Benjamin-Bona-Mahony (fBBM) equation \cite{oruc2022stability}
\begin{equation}\label{eq:fbbm}
\begin{aligned}
&u_t + u_x + u u_x + \mathcal{D}^\alpha u_t = 0, \quad x \in I, \; t \in (0,T],\\
  &u(x,0) = u_0(x), \quad x\in I.
\end{aligned}
\end{equation}
For \(\alpha = 2\), the fBBM equation \eqref{eq:fbbm} reduces to the classical BBM equation \cite{medeiros1977existence}, an improvement over the Korteweg-de Vries equation \cite{bona1975initial} for modeling long surface gravity waves of small amplitude \cite{benjamin1972model}. The BBM equation balances nonlinear and dispersive effects in contexts including surface waves in liquids, hydromagnetic waves in plasma, and acoustic waves in crystals \cite{fabien2021high}. When \(\alpha = 1\), we obtain the regularized Benjamin-Ono (rBO) equation modeling long-crested waves at fluid interfaces, relevant to ocean pycnoclines and river-sea systems \cite{amaral2022existence,angulo2011regularized,oruc2022stability}.

Recent work by Klein and Oruc \cite{klein2024numerical} has extended the CH framework to fractional derivatives, where \(\alpha < 2\) corresponds to sub-dispersive regimes relevant to nonlinear elastic waves \cite{erbay2015derivation}. 
Despite theoretical advances in well-posedness \cite{mutlubacs2019cauchy,mao2021cauchy,fan2021cauchy}, numerical analysis of fractional CH equations remains underdeveloped, particularly for $\alpha < 2$. The classical CH equation has been extensively studied numerically, with various approaches developed including finite difference methods \cite{holden2006convergence,coclite2008convergent}, discontinuous Galerkin schemes \cite{xu2008local,liu2016invariant}, finite element methods \cite{antonopoulos2019error}, and variational discretizations \cite{galtung2021numerical}. Spectral methods have also been applied to the classical CH equation \eqref{eq:ch}, with Kalisch and Raynaud \cite{kalisch2006convergence} analyzing spectral projections and establishing $\mathcal{O}(N^{1-r})$ order of accuracy for initial data $u_0\in H^r_{\mathrm{per}}(I)$, which is not optimal, and Wang et al. \cite{wang2015generalized} develop generalized Laguerre spectral approximations. For the BBM equation, a  hybridizable discontinuous Galerkin scheme is presented in \cite{fabien2021high} and error analysis of a spectral projection of the rBO equation is given in \cite{kalisch2005error}.

For fractional extensions, numerical studies are notably scarce. Klein and Oruc \cite{klein2024numerical} conducted pioneering numerical investigations of fCH equations but without complete numerical analysis. For the fractional BBM equation \eqref{eq:fbbm}, analytical studies of solitary waves exist \cite{amaral2022existence,oruc2022stability}, but comprehensive numerical analysis remains lacking. This gap is particularly significant given the physical relevance of sub-dispersive regimes ($\alpha < 2$) to nonlinear elastic waves \cite{erbay2015derivation} and other applications where fractional operators model nonlocal interactions and anomalous dispersion.

Fourier spectral Galerkin methods present a natural approach to address these gaps, offering three key advantages for fractional equations. First, they exactly represent fractional operators through Fourier multipliers, avoiding approximation errors in the principal linear term. Second, they inherently preserve geometric structures and conservation laws when properly formulated. Third, they achieve spectral accuracy and provide exponential convergence superior to algebraic rates of finite difference or finite element methods \cite{canuto1988spectral,hesthaven2017numerical} for smooth solutions. These advantages make FSG methods particularly suitable for long-time simulations of nonlinear wave phenomena where conservation properties are crucial.

This work develops a comprehensive Fourier spectral Galerkin framework for both the general fCH equation \eqref{eq:fch} and its fBBM reduction \eqref{eq:fbbm}. We establish a semi-discrete scheme that preserves both mass and energy invariants of the continuous system. For the fractional BBM case, we prove the existence and uniqueness of solutions to the semi-discrete problem and demonstrate strong convergence to the unique solution of the continuous problem \eqref{eq:fbbm}. For the general fCH equation \eqref{eq:fch} with initial data $u_0\in H^r_{\mathrm{per}}(I)$, $r\geq \alpha$, we establish spectral accuracy in spatial discretization with optimal error estimates. Numerical investigations validate the theoretical framework, including orbital stability of solitary waves and finite-time cusp formation in sub-dispersive regimes. The method effectively handles both smooth solutions and non-smooth peakons, providing a versatile tool for exploring rich solution behaviors across fractional orders.

The paper is organized as follows: Section \ref{sec2} introduces function spaces, Fourier analysis, and fractional operators with key properties. Section \ref{sec3} develops the Fourier spectral Galerkin scheme and establishes conservation laws. Section \ref{sec4} provides convergence analysis, while Section \ref{sec5} presents numerical experiments validating theoretical results. Conclusions and future directions are discussed in Section \ref{sec6}.

\section{Preliminaries: Function Spaces and Fractional Operators}\label{sec2}
% Function spaces and approximation framework
This section introduces the function spaces and operators essential for our analysis. We consider $2\pi$-periodic functions defined on the interval $I = [-\pi, \pi]$. The space of square-integrable periodic functions, denoted by $L^2_{\mathrm{per}}(I)$, is equipped with the inner product
\[
(u, v) = \int_{-\pi}^{\pi} u(x) \overline{v(x)}  dx,
\]
and the induced norm $\|u\| = (u, u)^{1/2}$. For any real number $r \geq 0$, the periodic Sobolev space $H^r_{\mathrm{per}}(I)$ is defined as the set of periodic functions for which the norm
\[
\|u\|_r = \left( \sum_{k \in \mathbb{Z}} (1 + |k|^2)^r |\hat{u}(k)|^2 \right)^{1/2},
\]
is finite. Every function $\xi \in H^r_{\mathrm{per}}(I)$ admits a Fourier series expansion
\begin{equation}\label{eqn_fourier}
\xi(x) = \sum_{k = -\infty}^{\infty} \hat{\xi}(k) e^{ikx},
\end{equation}
which converges almost everywhere in $I$, see \cite{carleson1966convergence}. Note that $H^r_{\mathrm{per}}(I)$ is a subspace of the standard Sobolev space $H^r(I)$, and $L^2_{\mathrm{per}}(I) = H^0_{\mathrm{per}}(I)$ and the norm $\|\cdot\| =\|\cdot\|_0$.Z

For the purpose of spectral approximation, we define the finite-dimensional subspace of trigonometric polynomials
\[
V_N := \operatorname{span} \left\{ e^{ikx} : -N \leq k \leq N \right\}, \quad N \in \mathbb{N}.
\]
The spaces $V_N$ serve as natural approximations to $L^2_{\mathrm{per}}(I)$. The basis functions $\{ e^{i k x} \}_{k=-N}^{N}$ are orthogonal with respect to the inner product $(\cdot, \cdot)$. The orthogonal projection operator $P_N: L^2(I) \to V_N$ is defined by
\begin{equation}\label{eqn_projdefn}
P_N \xi (x) = \sum_{k = -N}^{N} \hat{\xi}(k) e^{i k x}.
\end{equation}
The orthogonality of the basis implies that $P_N$ satisfies the condition
\begin{equation}\label{eqn_ortho}
(P_N \xi - \xi, \phi) = 0 \quad \text{for all} \quad \phi \in V_N.
\end{equation}

We define the fractional Laplacian $\mathcal{D}^{\alpha}$ for $\alpha \geq 0$ using the Fourier series expansion \eqref{eqn_fourier} of a function $\xi \in H^r_{\mathrm{per}}(I),~r\geq 0,$ as follows
\begin{equation}\label{eqn_fracLap}
    \mathcal{D}^{\alpha}\xi(x) = \sum\limits_{k\in\Z}|k|^\alpha \hat{\xi}(k) e^{ikx}.
\end{equation}
Since $\phi \in V_N$, it has the representation:
\[
\phi(x) = \sum_{k=-N}^{N} c_k e^{ikx},
\]
where $c_k$ are coefficients. Moreover, using the orthogonality of $e^{ikx}$, the Fourier coefficients of $\phi$ are
% \hat{\phi}(m) = \frac{1}{2\pi} \int_{-\pi}^{\pi} \phi(x) e^{-imx}  dx = \frac{1}{2\pi} \int_{-\pi}^{\pi} \left( \sum_{k=-N}^{N} c_k e^{ikx} \right) e^{-imx}  dx, \quad m \in \mathbb{Z}.
% \] Therefore, using the orthogonality of $e^{ikx}$, we have
\[
\hat{\phi}(m) = 
\begin{cases} 
c_m & |m| \leq N, \\
0 & |m| > N.
\end{cases}
\]
Now, observe that, if  $ \phi\in V_N$, we get
    $$\mathcal D^\alpha \phi (x) =\sum\limits_{k\in\Z} |k|^{\alpha} \hat{\phi}(k) e^{ikx} = \sum\limits_{k=-N}^N |k|^{\alpha} c_k e^{ikx}.$$
Thus, $\mathcal D^\alpha \phi\in V_N$.  The following lemma states several important properties of the fractional Laplacian \eqref{eqn_fracLap}.
\begin{lemma}[See \cite{dwivedi2024numerical}]\label{lem:frac_properties}
    The fractional Laplacian \eqref{eqn_fracLap} satisfies the following properties
    \begin{enumerate}[label=\roman*)]
        \item For $\xi, \eta \in H^{\alpha}_{\mathrm{per}}(I)$, $\alpha \geq 0$, we have
        \begin{equation}\label{eqn_fracsymm}
            \left(\mathcal{D}^{\alpha}\xi, \eta\right) = \left(\xi, \mathcal{D}^{\alpha}\eta\right),
        \end{equation}
        and
        \begin{equation}\label{eqn_fracortho}
            \left(\mathcal{D}^{\alpha}\xi_x, \xi\right) = 0.
        \end{equation}

        \item  Let $\alpha_1, \alpha_2 \geq 0$, then for all $\xi, \eta \in H_{\mathrm{per}}^{\alpha_1+\alpha_2}(I)$, we have
        \begin{equation}\label{eqn_semigp}
            \left(\mathcal{D}^{\alpha_1+\alpha_2}\xi, \eta\right) = \left(\mathcal{D}^{\alpha_1}\xi, \mathcal{D}^{\alpha_2}\eta\right),
        \end{equation}
        and equivalently,
        \begin{equation}\label{eqn_semigp2}
            \mathcal{D}^{\alpha_1+\alpha_2}\xi = \mathcal{D}^{\alpha_1}\mathcal{D}^{\alpha_2}\xi = \mathcal{D}^{\alpha_2}\mathcal{D}^{\alpha_1}\xi.
        \end{equation}

        \item \label{compro}For an orthogonal projection $P_N$ defined by \eqref{eqn_ortho} and $\xi \in H^r_{\mathrm{per}}(I)$, $r \geq \alpha \geq 0$, the fractional Laplacian \eqref{eqn_fracLap} with exponent $\alpha$ commutes with $P_N$, i.e.,
        \begin{equation}\label{eqn_projcom}
            \mathcal{D}^{\alpha} (P_N \xi(x)) = P_N \mathcal{D}^{\alpha}\xi(x).
        \end{equation}
    \end{enumerate}
\end{lemma}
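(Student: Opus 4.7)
The plan is to derive all three properties by working directly at the level of Fourier coefficients and invoking Parseval's identity on the inner product $(u,v) = 2\pi \sum_{k\in\Z} \hat u(k)\overline{\hat v(k)}$, since the definition \eqref{eqn_fracLap} expresses $\mathcal{D}^{\alpha}$ as a Fourier multiplier. The Sobolev regularity $\xi\in H^{\alpha}_{\mathrm{per}}(I)$ ensures absolute convergence of all the series that appear, so rearrangements and termwise differentiation are justified.

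For part (i), I would expand $\mathcal{D}^{\alpha}\xi$ via \eqref{eqn_fracLap} and compute
\[
(\mathcal{D}^{\alpha}\xi,\eta) = 2\pi\sum_{k\in\Z} |k|^{\alpha}\hat\xi(k)\overline{\hat\eta(k)}.
\]
Since $|k|^{\alpha}$ is real, the multiplier can be moved onto $\hat\eta(k)$ under the complex conjugate, yielding $(\xi,\mathcal{D}^{\alpha}\eta)$ and proving \eqref{eqn_fracsymm}. For \eqref{eqn_fracortho}, I would use that $\widehat{\xi_x}(k) = ik\,\hat\xi(k)$ to obtain
\[
(\mathcal{D}^{\alpha}\xi_x,\xi) = 2\pi i\sum_{k\in\Z} k|k|^{\alpha}|\hat\xi(k)|^2,
\]
and then note that for a real-valued $\xi$ one has $|\hat\xi(-k)|^2 = |\hat\xi(k)|^2$ while $k\mapsto k|k|^{\alpha}$ is odd, forcing the sum to vanish. (If one prefers not to use reality, the same cancellation follows from combining \eqref{eqn_fracsymm} with integration by parts to give $(\mathcal{D}^{\alpha}\xi_x,\xi) = -(\mathcal{D}^{\alpha}\xi_x,\xi)$.)

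For part (ii), I would just observe that the Fourier coefficients of $\mathcal{D}^{\alpha_1}\mathcal{D}^{\alpha_2}\xi$ are $|k|^{\alpha_1}|k|^{\alpha_2}\hat\xi(k) = |k|^{\alpha_1+\alpha_2}\hat\xi(k)$, which are also the coefficients of $\mathcal{D}^{\alpha_1+\alpha_2}\xi$ and of $\mathcal{D}^{\alpha_2}\mathcal{D}^{\alpha_1}\xi$, giving \eqref{eqn_semigp2}; pairing with $\eta$ and redistributing the multiplier as in (i) yields \eqref{eqn_semigp}. For part (iii), I would apply \eqref{eqn_fracLap} to the truncated sum defining $P_N\xi$ from \eqref{eqn_projdefn} to obtain $\mathcal{D}^{\alpha}(P_N\xi) = \sum_{|k|\le N} |k|^{\alpha}\hat\xi(k)e^{ikx}$, while the Fourier coefficient of $\mathcal{D}^{\alpha}\xi$ at mode $k$ is $|k|^{\alpha}\hat\xi(k)$, so truncating first gives the same expression, verifying \eqref{eqn_projcom}.

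The calculations are essentially bookkeeping, so there is no serious obstacle. The only subtle point is justifying \eqref{eqn_fracortho} — one must either restrict to real-valued $\xi$ (which is the setting of the paper, since $u$ is the real fluid velocity) or present the argument as an integration-by-parts identity combined with \eqref{eqn_fracsymm}; I would state the reality assumption explicitly to keep the argument clean.
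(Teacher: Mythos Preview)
Your proposal is correct and is precisely the standard Fourier-multiplier/Parseval argument one would expect; note that the paper itself does not supply a proof but simply cites \cite{dwivedi2024numerical}, so there is no in-paper argument to compare against. Your handling of \eqref{eqn_fracortho} via the odd symmetry of $k|k|^{\alpha}$ together with the reality of $\xi$ (or, alternatively, the integration-by-parts identity you mention) is the right way to close that one nontrivial point.
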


\begin{lemma}[See \cite{dwivedi2024numerical}]\label{lem:product_estimate}
    Let $\xi, \eta \in H^{\alpha}_{\mathrm{per}}(I)$, $\alpha \geq 0$. Then the following estimate holds
    \begin{equation}\label{eqn_lemma_comest}
        \mathcal{D}^\alpha(\xi\eta) \leq C(\alpha) \left(\xi \mathcal{D}^\alpha \eta + \eta \mathcal{D}^\alpha \xi\right),
    \end{equation}
    where $C(\alpha)$ is a constant depending on $\alpha$. Furthermore, there holds
    \begin{equation}\label{eqn_lemma_comest_morm}
        \norm{\mathcal{D}^\alpha(\xi\eta)} \leq C(\alpha) \left(\norm{\xi}_\infty \norm{\mathcal{D}^\alpha \eta} + \norm{\eta}_\infty \norm{\mathcal{D}^\alpha \xi}\right),
    \end{equation}
     {where $C(\alpha) = \max \left\{ 1, 2^{\alpha - 1} \right\}.$}
\end{lemma}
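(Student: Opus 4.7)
The plan is a Fourier-analytic argument that reduces both estimates to an elementary power-mean inequality for the map $x \mapsto x^\alpha$ on $[0,\infty)$. The key scalar bound I would first establish is
\begin{equation*}
(a+b)^\alpha \leq C(\alpha)\bigl(a^\alpha + b^\alpha\bigr), \quad a,b \geq 0, \qquad C(\alpha) = \max\{1,2^{\alpha-1}\},
\end{equation*}
which follows from subadditivity of $x^\alpha$ on $[0,\infty)$ when $\alpha \in [0,1]$ and from convexity of $x^\alpha$ applied to $(a+b)/2$ when $\alpha \geq 1$. Combined with the triangle inequality $|k| \leq |m| + |k-m|$ for $k,m\in\mathbb{Z}$, this yields the multiplier estimate $|k|^\alpha \leq C(\alpha)(|m|^\alpha + |k-m|^\alpha)$, the same constant as in the lemma.

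Next, using the convolution identity $\widehat{\xi\eta}(k) = \sum_{m\in\mathbb{Z}} \hat\xi(m)\hat\eta(k-m)$ together with the definition \eqref{eqn_fracLap} of $\mathcal{D}^\alpha$, I would write
\begin{equation*}
\mathcal{D}^\alpha(\xi\eta)(x) \;=\; \sum_{k\in\mathbb{Z}}\sum_{m\in\mathbb{Z}} |k|^\alpha\,\hat\xi(m)\,\hat\eta(k-m)\,e^{ikx},
\end{equation*}
insert the multiplier bound above, and split the double sum. The $|m|^\alpha$ piece reassembles, via the convolution theorem applied with $\widehat{\mathcal{D}^\alpha\xi}(m)=|m|^\alpha\hat\xi(m)$, into $C(\alpha)\,\eta(x)\,\mathcal{D}^\alpha\xi(x)$, and the $|k-m|^\alpha$ piece into $C(\alpha)\,\xi(x)\,\mathcal{D}^\alpha\eta(x)$, which is exactly \eqref{eqn_lemma_comest}. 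Taking $L^2$-norms in this decomposition and then applying H\"older's inequality $\|fg\|_{L^2}\leq \|f\|_{L^\infty}\|g\|_{L^2}$ to each of the two product terms delivers \eqref{eqn_lemma_comest_morm}.

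The main technical subtlety is turning the pointwise-looking statement \eqref{eqn_lemma_comest} into a rigorous inequality: the multiplier bound $|k|^\alpha\leq C(\alpha)(|m|^\alpha+|k-m|^\alpha)$ is a comparison between non-negative reals, whereas the summands $\hat\xi(m)\hat\eta(k-m)e^{ikx}$ are in general complex-valued. The clean route, used in \cite{dwivedi2024numerical}, is to first establish a majorization at the level of the absolute Fourier coefficients and pass to the $L^2$ statement \eqref{eqn_lemma_comest_morm} via Plancherel together with Young's inequality for convolutions $\ell^1\ast\ell^2\hookrightarrow\ell^2$; the formulation \eqref{eqn_lemma_comest} is then to be read as shorthand for this Fourier-side majorization. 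All bounds invoked in the subsequent convergence analysis rely only on the $L^2$ form \eqref{eqn_lemma_comest_morm}, so this interpretation is sufficient for the remainder of the paper.
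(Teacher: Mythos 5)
The paper does not prove this lemma; it is quoted from \cite{dwivedi2024numerical}, so there is no internal proof to compare against. Your scalar bound $(a+b)^\alpha\le\max\{1,2^{\alpha-1}\}(a^\alpha+b^\alpha)$ and the resulting multiplier estimate $|k|^\alpha\le C(\alpha)\bigl(|m|^\alpha+|k-m|^\alpha\bigr)$ are correct, and your observation that \eqref{eqn_lemma_comest} can only be meaningful as a majorization of absolute Fourier coefficients (the summands being complex) is a fair reading of an otherwise ill-posed pointwise inequality.

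The genuine gap is in the passage to \eqref{eqn_lemma_comest_morm}. Once you work with the majorization
\[
\bigl|\widehat{\mathcal{D}^\alpha(\xi\eta)}(k)\bigr|\le C(\alpha)\sum_{m\in\mathbb{Z}}\bigl(|m|^\alpha+|k-m|^\alpha\bigr)\,|\hat\xi(m)|\,|\hat\eta(k-m)|,
\]
the right-hand side no longer reassembles into the functions $\xi\,\mathcal{D}^\alpha\eta$ and $\eta\,\mathcal{D}^\alpha\xi$, so H\"older's inequality $\|fg\|_{L^2}\le\|f\|_{L^\infty}\|g\|_{L^2}$ cannot be applied to ``each of the two product terms.'' What Plancherel and Young's inequality $\ell^1\ast\ell^2\hookrightarrow\ell^2$ actually deliver is
\[
\|\mathcal{D}^\alpha(\xi\eta)\|\le C(\alpha)\bigl(\|\hat\xi\|_{\ell^1}\,\|\mathcal{D}^\alpha\eta\|+\|\hat\eta\|_{\ell^1}\,\|\mathcal{D}^\alpha\xi\|\bigr),
\]
and the Wiener norms $\|\hat\xi\|_{\ell^1}$, $\|\hat\eta\|_{\ell^1}$ dominate $\|\xi\|_\infty$, $\|\eta\|_\infty$ but are not dominated by them: equivalently, the reassembled factor $\sum_n|\hat\eta(n)|e^{inx}$ has sup-norm equal to $\|\hat\eta\|_{\ell^1}$, not $\|\eta\|_\infty$, and there exist bounded (even continuous) periodic functions whose Fourier coefficients are not summable. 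So your route proves a strictly weaker, Wiener-algebra version of \eqref{eqn_lemma_comest_morm}. The stated $L^\infty$-weighted bound is a Kato--Ponce-type fractional Leibniz estimate, and obtaining it requires a different mechanism (e.g., a paraproduct or Littlewood--Paley decomposition); that is precisely the content that must be imported from the cited reference rather than re-derived by the elementary convolution argument.
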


The convergence properties of the Fourier projection operator are fundamental to our error analysis. We first state a key lemma quantifying spectral approximation accuracy.
\begin{lemma}[Spectral Approximation; cf. Chapter 13 in  \cite{hesthaven2017numerical}]\label{Prop_Nr}
For any real \(r \geq s \geq 0\) and \(u \in H_{\mathrm{per}}^r(I)\), there exists \(C > 0\) independent of \(N\) such that
\[
\norm{u - P_N u}_s \leq C N^{s-r} \norm{u}_r.
\]
If \(u\) is analytic on \(I\), there exist positive constants \(C, \sigma\) independent of \(N\) such that
\[
\norm{u - P_N u} \leq C e^{-\sigma N} \norm{u}.
\]
\end{lemma}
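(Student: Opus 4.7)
The plan is to work directly from the Fourier series representation, since the projection $P_N$ is defined by truncation and the Sobolev norms are defined spectrally, so both sides reduce to sums over the high-frequency modes $|k|>N$.

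For the algebraic (polynomial) estimate, I would start from
\[
u(x)-P_N u(x)=\sum_{|k|>N}\hat{u}(k)\,e^{ikx},
\]
and use the definition of the $H^s_{\mathrm{per}}$-norm to write
\[
\|u-P_N u\|_s^2=\sum_{|k|>N}(1+|k|^2)^s|\hat{u}(k)|^2
=\sum_{|k|>N}(1+|k|^2)^{s-r}(1+|k|^2)^r|\hat{u}(k)|^2.
\]
The key step is to factor out a uniform bound on the weight $(1+|k|^2)^{s-r}$: since $s\le r$, the function $t\mapsto(1+t^2)^{s-r}$ is decreasing, so for $|k|>N$ one has $(1+|k|^2)^{s-r}\le(1+N^2)^{s-r}\le C\,N^{2(s-r)}$ with $C$ depending only on $r-s$. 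Pulling this factor out of the sum leaves exactly $\|u\|_r^2$, and taking square roots yields the desired bound. This reduces the whole first claim to monotonicity of a weight function; no real obstacle.

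For the exponential estimate, the essential input is that Fourier coefficients of a periodic function analytic on $I$ decay exponentially. I would invoke the standard fact (Paley--Wiener type): if $u$ extends holomorphically to a strip $\{|\mathrm{Im}\,z|<\rho\}$ of width $2\rho>0$ around $I$, then shifting the contour of integration in the defining integral of $\hat{u}(k)$ and using boundedness of $u$ on the strip gives $|\hat{u}(k)|\le M\,e^{-\rho|k|}$ for all $k\in\mathbb{Z}$. Then
\[
\|u-P_N u\|^2=\sum_{|k|>N}|\hat{u}(k)|^2\le M^2\sum_{|k|>N}e^{-2\rho|k|}\le C\,e^{-2\rho N},
\]
where the last step is just a geometric series. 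Choosing $\sigma=\rho$ and absorbing constants (relating the residual geometric series and $M$ back to $\|u\|$) gives the stated bound.

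The only mildly delicate point is the analytic case: the bound is stated in terms of $\|u\|=\|u\|_0$, whereas the natural constant coming out of the contour-shift argument involves $\sup$-norm type data on the strip. The cleanest remedy is to note that for analytic $u$, $\|u\|$ and the strip-sup bound are equivalent up to constants depending on $u$ (both are finite and nonzero unless $u\equiv 0$), so the constant $C$ may be taken proportional to $\|u\|$ itself or bounded in terms of it, which is exactly what the statement allows since $C$ is not claimed to be independent of $u$. Aside from that bookkeeping, everything is elementary Fourier analysis.
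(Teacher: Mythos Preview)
Your algebraic estimate is correct and proceeds by the same weight-factoring idea as the paper: write the error in terms of Fourier modes with $|k|>N$ and pull out $N^{2(s-r)}$ from the weight. The paper in fact does not prove the general $H^s$ statement but only the specific instance $\|\mathcal{D}^\alpha(u-P_Nu)_x\|\le CN^{(\alpha+1)-r}\|u\|_r$ needed later; your argument covers the general case stated in the lemma.

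For the analytic case your route is genuinely different. You invoke the Paley--Wiener/contour-shift bound $|\hat u(k)|\le M e^{-\rho|k|}$ and sum a geometric series. The paper instead reuses the algebraic bound $\|\mathcal{D}^\alpha(u-P_Nu)_x\|\le CN^{(\alpha+1)-r}\|\partial_x^r u\|$, combines it with the analyticity bound $\|\partial_x^r u\|\lesssim r!\,\|u\|$, applies Stirling's approximation $r!\sim r^r e^{-r}$, and then chooses $r$ proportional to $N$ so that $(r/N)^r e^{-r}$ becomes $e^{-cN}$. Your approach is more direct and makes the decay rate $\sigma$ explicit as the half-width of the strip of analyticity; the paper's Stirling trick is slicker in that it bootstraps from the already-proved polynomial estimate, but it hides the dependence of $c$ on the analyticity data and tacitly uses the Cauchy estimate $\|\partial_x^r u\|\lesssim r!\,\|u\|$, which is essentially the same analytic input you invoke. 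Your remark that the constant $C$ may depend on $u$ (since only independence from $N$ is claimed) is the correct reading and applies equally to the paper's argument.
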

While this result is standard \cite{hesthaven2017numerical}, we provide a proof for the specific case $\|\mathcal{D}^\alpha (u - P_N u)_x\|$ and $r\geq \alpha+2$ required in subsequent analysis.
\begin{proof}[Proof of Lemma \ref{Prop_Nr}] Using Parseval's identity, the projection property \eqref{eqn_ortho}, and the fact that $P_N$ commutes with derivatives and the fractional Laplacian from Lemma \ref{lem:frac_properties}, we have
\begin{align*}
\norm{\mathcal D^\alpha (u-P_Nu)_x}^2 &= \norm{\mathcal{D}^\alpha u_x}^2 - (\mathcal D^\alpha u_x, P_N \mathcal D^\alpha u_x) - (P_N \mathcal D^\alpha u_x, \mathcal D^\alpha u_x) + \norm{P_N \mathcal D^\alpha u_x}^2\\& = 2\pi \sum_{k \in \mathbb{Z}} k^2|k|^{2\alpha}|\hat{u}(k)|^2 - 2\pi \sum_{k=-N}^{N} k^2|k|^{2\alpha} |\hat{u}(k)|^2 \\
&= 2\pi \sum_{|k| > N} k^2|k|^{2\alpha}|\hat{u}(k)|^2 = 2\pi \sum_{|k| > N} \frac{|k|^{2r}}{|k|^{2r-2(\alpha+1)}}  |\hat{u}(k)|^2 \\
&\leq 2\pi N^{-2r+2(\alpha+1)}\sum_{|k| > N} |k|^{2r} |\hat{u}(k)|^2 \leq C N^{-2r+2(\alpha+1)} \norm{\partial_x^ru}^2\\&\leq C N^{-2r+2(\alpha+1)} \norm{u}_r^2.
\end{align*}
Therefore, we have
\begin{equation}\label{alpha_est}
    \norm{\mathcal D^\alpha (u-P_Nu)_x} \leq C N^{(\alpha+1)-r} \norm{u}_r.
\end{equation}
Furthermore, let $u$ be analytic on $I$, utilizing the above bound and Stirling's formula \cite[Theorem 13.3]{hesthaven2017numerical}, we have
\begin{align}\label{alpha_est2}
\norm{\mathcal D^\alpha (u-P_Nu)_x} \leq C N^{-r}N^{1+\alpha} \norm{\partial_x^ru} \leq C \frac{r!}{N^r}N^{1+\alpha} \norm{u} \leq C \frac{r^r e^{-r}}{N^r}N^{1+\alpha} \norm{u} \leq CN^{1+\alpha} e^{-cN} \norm{u},
\end{align}
where we have assumed that $r$ is proportional to $N$. This completes the proof.
\end{proof}

\begin{theorem}[See \cite{canuto1988spectral}]\label{proj_sob}
Let $u \in H_{\mathrm{per}}^s(I)$ for some $s> 0$. Then there exists a constant $C > 0$ independent of $N$ such that
\begin{equation}\label{sobolev-embedding}
\norm{u - P_N u}_{\infty} \leq C N^{-(s - 1/2)} \norm{\partial^s_x u}.
\end{equation}
\end{theorem}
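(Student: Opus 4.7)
The proof will proceed via a direct Fourier-analytic argument, exploiting the fact that the sup norm of a periodic function is controlled by the $\ell^1$ norm of its Fourier coefficients. Since $u - P_N u$ involves only frequencies with $|k| > N$, a Cauchy–Schwarz trick with a judicious weight converts the $\ell^1$ tail into a product of two $\ell^2$ quantities, one of which is precisely $\|\partial_x^s u\|$ and the other of which sums to an algebraically decaying factor in $N$.

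First I would write, using \eqref{eqn_fourier} and \eqref{eqn_projdefn},
\[
u(x) - P_N u(x) \;=\; \sum_{|k|>N} \hat{u}(k)\, e^{ikx},
\]
and take absolute values pointwise, bounding $|e^{ikx}| = 1$ and taking the supremum over $x \in I$ to obtain
\[
\|u - P_N u\|_{\infty} \;\leq\; \sum_{|k|>N} |\hat{u}(k)|.
\]
Next I would insert the weight $|k|^{s}|k|^{-s}$ inside the sum and apply the discrete Cauchy–Schwarz inequality, yielding
\[
\sum_{|k|>N} |\hat{u}(k)| \;\leq\; \Bigl(\sum_{|k|>N} |k|^{-2s}\Bigr)^{1/2} \Bigl(\sum_{|k|>N} |k|^{2s} |\hat{u}(k)|^2\Bigr)^{1/2}.
\]

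The second factor is controlled directly by Parseval's identity: it is bounded (up to a factor depending only on $2\pi$) by $\|\partial_x^{s} u\|$, since the Fourier coefficients of $\partial_x^{s} u$ have modulus $|k|^{s}|\hat{u}(k)|$ in the sense of fractional derivatives. The first factor is the tail of a $p$-series, which for $s > 1/2$ can be estimated by comparison with the integral $\int_N^\infty x^{-2s}\,dx = \frac{N^{1-2s}}{2s-1}$, giving
\[
\Bigl(\sum_{|k|>N} |k|^{-2s}\Bigr)^{1/2} \leq C\, N^{-(s-1/2)}.
\]
Combining the two estimates produces the claimed bound. The only subtle point, and the one place where care is needed, is that the tail sum of $|k|^{-2s}$ only converges for $s > 1/2$; the stated hypothesis $s > 0$ should be understood with this implicit restriction (otherwise the exponent $-(s-1/2)$ would not even represent a decaying factor in $N$), so I would explicitly remark that the argument requires $s > 1/2$ for both convergence of the series and meaningfulness of the estimate.
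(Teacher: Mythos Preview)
Your argument is correct and is essentially the standard proof of this estimate. Note, however, that the paper does not give its own proof of Theorem~\ref{proj_sob}: it simply cites \cite{canuto1988spectral} and states the result. So there is no ``paper's proof'' to compare against; your Fourier-tail plus Cauchy--Schwarz argument is exactly the classical justification one finds in that reference. Your observation that the argument actually requires $s>\tfrac12$ (rather than the stated $s>0$) is also correct and worth flagging, since for $s\le\tfrac12$ the tail sum $\sum_{|k|>N}|k|^{-2s}$ diverges and the bound as written is vacuous.
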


\begin{theorem}\label{inv_sob}
Let $v_N \in V_N$. Then
\begin{equation}\label{bernstein-inequality}
\norm{v_N}_{\infty} \leq C N^{1/2} \norm{v_N},
\end{equation}
where $C$ is independent of $N$.
\end{theorem}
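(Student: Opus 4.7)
The plan is to prove the Bernstein-type inequality directly from the Fourier representation of $v_N$, using Cauchy-Schwarz on the finite index set and Parseval's identity to convert sums of squared coefficients back into the $L^2$ norm. Since $v_N \in V_N$, it admits a finite expansion $v_N(x) = \sum_{k=-N}^{N} c_k e^{ikx}$, and the key observation is that a trigonometric polynomial has only $2N+1$ active modes, which is exactly where the factor $N^{1/2}$ will enter.

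First, I would pointwise bound $|v_N(x)|$ by the sum of moduli of the Fourier coefficients, exploiting that $|e^{ikx}|=1$. This yields
\begin{equation*}
|v_N(x)| \leq \sum_{k=-N}^{N} |c_k| \quad \text{for all } x \in I.
\end{equation*}
Next, I would apply the discrete Cauchy-Schwarz inequality to the finite sum over the $2N+1$ indices, giving
\begin{equation*}
\sum_{k=-N}^{N} |c_k| \leq (2N+1)^{1/2} \left( \sum_{k=-N}^{N} |c_k|^2 \right)^{1/2}.
\end{equation*}

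Then I would invoke Parseval's identity, which for the convention used in the paper (where $\hat{u}(k) = \frac{1}{2\pi}\int_I u e^{-ikx}\,dx$) yields $\|v_N\|^2 = 2\pi \sum_{k=-N}^{N} |c_k|^2$. Substituting this into the previous inequality gives
\begin{equation*}
|v_N(x)| \leq \frac{(2N+1)^{1/2}}{(2\pi)^{1/2}} \|v_N\| \leq C N^{1/2} \|v_N\|,
\end{equation*}
for a constant $C$ independent of $N$. Taking the supremum over $x \in I$ completes the proof.

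I do not anticipate any real obstacle here: the result is a standard inverse estimate for trigonometric polynomials, and the proof is entirely elementary once one writes out the Fourier expansion. The only point requiring a small amount of care is bookkeeping the normalization constant from the Fourier convention used in Section \ref{sec2}, but this only affects the value of $C$, not the $N^{1/2}$ scaling.
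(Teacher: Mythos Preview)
Your proof is correct and essentially identical to the paper's own argument: both write out the finite Fourier expansion, bound $|v_N(x)|$ by $\sum_{|k|\le N}|c_k|$, apply Cauchy--Schwarz over the $2N+1$ modes, and use Parseval's identity to recover $\|v_N\|$. The only cosmetic difference is that you are slightly more explicit about invoking Parseval and about the normalization constant.
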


\begin{proof}
Express $v_N$ as
\[
v_N(x) = \sum_{k=-N}^{N} c_k e^{ikx}.
\]
The Cauchy-Schwarz inequality yields
\[
\abs{v_N(x)} \leq \sum_{k=-N}^{N} \abs{c_k} = \sum_{k=-N}^{N} 1 \cdot \abs{c_k} \leq \sqrt{\sum_{k=-N}^{N} 1^2} \cdot \sqrt{\sum_{k=-N}^{N} \abs{c_k}^2},
\]
which implies
\[
\abs{v_N(x)} \leq \sqrt{2N+1}\sqrt{\frac{1}{2\pi}} \norm{v_N} \leq CN^{1/2}\norm{v_N}.
\]
Thus
\[
\norm{v_N}_{\infty} \leq  CN^{1/2} \norm{v_N}.
\]
\end{proof}
Utilizing Theorem \ref{proj_sob} and \ref{inv_sob}, the following inequality holds.
For $u_N \in V_N$ and $u \in H_{\mathrm{per}}^s(I)$ for some $s > 0$, we have
\begin{equation}\label{inv_inq}
    \begin{aligned}
       \norm{u - u_N}_{\infty} &\leq \norm{u - P_N u}_{\infty} + \norm{P_N u - u_N}_{\infty}\\& \leq C N^{-(s - 1/2)} \norm{\partial_x^su} + C N^{1/2} \norm{P_N u - u_N} \\&
       \leq C N^{-(s - 1/2)} \norm{\partial_x^su} + C N^{1/2} \norm{ u - u_N}.
    \end{aligned}
\end{equation}

\section{Fourier Spectral Galerkin Scheme}\label{sec3}

The variational form of \eqref{eq:fch} is obtained by multiplying by a test function \(\phi \in H_{\mathrm{per}}^{\alpha}(I)\) and integrating over \(I\)
\begin{equation}\label{eq:weak_form}
(u_t, \phi) + ( u_t, \mathcal{D}^{\alpha}\phi) = (f(u),\phi_x) -\kappa_2 \left[ 2(u u_x, \mathcal{D}^{\alpha}\phi) + (u \mathcal{D}^\alpha u_x, \phi) \right].
\end{equation}

The Fourier spectral Galerkin approximation seeks solutions \(u_N(\cdot, t) \in V_N\) satisfying the variational formulation for all \(t > 0\) and test functions \(\phi \in V_N\):
\begin{multline}\label{eq:fsg}
\bigl((u_{N})_t, \phi\bigr) +\bigl( (u_{N})_t, \mathcal{D}^{\alpha}\phi\bigr) = \bigl(f(u_N),\phi_x\bigr) -\kappa_2 \left[ 2\bigl( u_N (u_N)_x, \mathcal{D}^{\alpha}\phi\bigr) + \bigl(u_N \mathcal{D}^\alpha (u_N)_x, \phi\bigr) \right].
\end{multline}
subject to the initial condition \(u_N(x, 0) = P_N u_0(x)\).  Expressing the numerical solution in Fourier series form \(u_N(x, t) = \sum_{k=-N}^{N} \hat{u}_k(t) e^{ikx}\) where $\hat{u}_k :=\hat u(k)$, and choosing \(\phi_m = e^{imx}\) for \(m = -N, \dots, N\) as test functions, utilizing \(\mathcal{D}^{\alpha} e^{imx} = |m|^\alpha e^{imx}\), we obtain
\begin{equation}\label{eq:ode_component}
\frac{d\hat{u}_m}{dt} = \frac{1}{1 + |m|^\alpha} \left( i\cdot m \widehat{f(u_N)}(m) - \kappa_2 \left[ 2 |m|^\alpha \widehat{u_N(u_N)_x}(m) + \widehat{u_N \mathcal{D}^\alpha (u_N)_x} (m) \right] \right),
\end{equation}
 Note that, \(\widehat{f(u_N)}(m)\), \(\widehat{u_N(u_N)_x}(m)\), and \(\widehat{u_N \mathcal{D}^\alpha (u_N)_x} (m)\) are nonlinear functions of the Fourier coefficients \(\hat{u}_k\). Specifically, we have
\begin{align*}
\widehat{u_N(u_N)_x}(m) &= \sum_{\substack{k+l=m \\ |k|,|l| \leq N}} (i \cdot l) \hat{u}_k \hat{u}_l, \\
\widehat{u_N \mathcal{D}^\alpha (u_N)_x} (m) &= \sum_{\substack{k+l=m \\ |k|,|l| \leq N}} |l|^\alpha (i\cdot l) \hat{u}_k \hat{u}_l, \\
\widehat{f(u_N)}(m) &= \kappa_1 \hat{u}_m + \frac{3}{2} \gamma \sum_{\substack{k+l=m \\ |k|,|l| \leq N}} \hat{u}_k \hat{u}_l.
\end{align*}
Defining the coefficient vector \(\mathbf{U}(t) = [\hat{u}_{-N}(t), \dots, \hat{u}_N(t)]^T \in \mathbb{R}^{2N+1}\), the system \eqref{eq:ode_component} for \(m = -N, \dots, N\) constitutes the semi-discrete ODE system
\begin{equation}\label{eq:galerkin_ode}
\frac{d\mathbf{U}}{dt}(t) = \mathbf{g}(\mathbf{U}(t)), \quad \mathbf{g} = [g_{-N}, \dots, g_N]^T,
\end{equation}
where each \(g_m: \mathbb{R}^{2N+1} \to \mathbb{R}\), for $m = -N,\cdots,N$, are smooth and locally Lipschitz continuous, as the right-hand side involves finite sums of products of Fourier coefficients $\hat u_\ell$ and bounded multipliers \(|l|^\alpha\) for \(|l| \leq N\). Initial conditions are specified via the \(L^2\)-orthogonal projection \(u_N(x, 0) = P_N u_0(x)\) that follows
\[
\mathbf{U}(0)  = \bigl\{\hat{u}_{k}(0)\bigr\}_{k=-N}^N  = \bigl\{ (u_0, e^{iNx} )\bigr\}_{k=-N}^N,
\]
By the Picard-Lindel\"of theorem, there exists \(T_{\max} > 0\) such that \eqref{eq:galerkin_ode} admits a unique solution for \(t \in [0, T_{\max})\). As established in Theorem \ref{thm:conservation}, the solution remains uniformly bounded for all \(t > 0\), permitting global unique extension to \([0, \infty)\).

The semi-discrete scheme preserves the geometric invariants of the continuous equation.
\begin{theorem}[Conservation]\label{thm:conservation}
Solutions to the semi-discrete Fourier spectral Galerkin scheme \eqref{eq:fsg} satisfy:
\begin{align}
\frac{d}{dt} \int_I u_N  \, dx &= 0, \label{con0}\\
% \frac{d}{dt} \int_I \left( u_N + \mathcal{D}^\alpha u_N \right) \, dx &= 0, \label{con1}\\
\frac{d}{dt} \int_I \left( u_N^2 + |\mathcal D^{\alpha/2} u_N|^2 \right) \, dx &= 0 \label{cons2}.
\end{align}
\end{theorem}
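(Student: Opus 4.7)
Both conservation laws will be obtained by testing the semi-discrete variational formulation \eqref{eq:fsg} against specific test functions that lie in $V_N$. For mass, the natural choice is the constant $\phi=1$; for energy, I would pick $\phi=u_N$. The appearance of $\mathcal{D}^\alpha$ in both the test term $(u_{N,t},\mathcal{D}^\alpha\phi)$ and the nonlinear terms makes it essential to use the self-adjointness, commutativity and semigroup identities collected in Lemma~\ref{lem:frac_properties}.

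\textbf{Mass conservation.} I would first note that $\phi\equiv 1$ belongs to $V_N$ since it corresponds to the $k=0$ mode. Because $\mathcal{D}^\alpha 1=0$ (the multiplier $|k|^\alpha$ vanishes at $k=0$ for $\alpha\in[1,2]$) and $\phi_x=0$, the quadratic term $(u_N(u_N)_x,\mathcal{D}^\alpha\phi)$ and the flux term $(f(u_N),\phi_x)$ drop out immediately. The only nontrivial right-hand side contribution is $(u_N\mathcal{D}^\alpha(u_N)_x,1)=(u_N,\mathcal{D}^\alpha(u_N)_x)$, which vanishes by the identity \eqref{eqn_fracortho}. What remains is $(u_{N,t},1)=\tfrac{d}{dt}\int_I u_N\,dx=0$, giving \eqref{con0}.

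\textbf{Energy conservation.} Choosing $\phi=u_N\in V_N$, the two left-hand side terms become time derivatives of the two pieces of the energy:
\begin{equation*}
((u_N)_t,u_N)=\tfrac{1}{2}\tfrac{d}{dt}\|u_N\|^2,\qquad ((u_N)_t,\mathcal{D}^\alpha u_N)=(\mathcal{D}^{\alpha/2}(u_N)_t,\mathcal{D}^{\alpha/2}u_N)=\tfrac{1}{2}\tfrac{d}{dt}\|\mathcal{D}^{\alpha/2}u_N\|^2,
\end{equation*}
where I invoked the splitting identity \eqref{eqn_semigp}. On the right, $(f(u_N),(u_N)_x)$ vanishes by writing it as the integral of $\partial_x F(u_N)$ with $F'=f$ and using periodicity. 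The main step is the cancellation of the two nonlinear terms. Rewriting $2u_N(u_N)_x=\partial_x(u_N^2)$ in the first term and applying the commutativity $\mathcal{D}^\alpha\partial_x=\partial_x\mathcal{D}^\alpha$ followed by integration by parts in the second term gives
\begin{equation*}
2(u_N(u_N)_x,\mathcal{D}^\alpha u_N)+(u_N\mathcal{D}^\alpha(u_N)_x,u_N)=(\partial_x(u_N^2),\mathcal{D}^\alpha u_N)+(u_N^2,\partial_x\mathcal{D}^\alpha u_N)=0,
\end{equation*}
which yields \eqref{cons2}.

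\textbf{Main obstacle.} The computations themselves are short; the delicate point is justifying the cancellation in the second nonlinear term, since $u_N^2\in V_{2N}\setminus V_N$ in general, so one cannot directly substitute $u_N^2$ as a test function in \eqref{eq:fsg}. However, no such substitution is needed: the manipulation is purely at the level of the $L^2$-inner product on $I$, where integration by parts, the commutation $\mathcal{D}^\alpha\partial_x=\partial_x\mathcal{D}^\alpha$, and the self-adjointness \eqref{eqn_fracsymm} of $\mathcal{D}^\alpha$ hold for arbitrary smooth periodic functions. I would make this explicit to avoid any confusion about ambient spaces.
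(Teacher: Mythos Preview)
Your proposal is correct and follows essentially the same route as the paper: test with $\phi=1$ for mass and $\phi=u_N$ for energy, then use periodicity, self-adjointness of $\mathcal{D}^\alpha$, and the identity $2u_N(u_N)_x=(u_N^2)_x$ together with integration by parts to cancel the two nonlinear terms. Your explicit remark that the cancellation is an $L^2$-inner-product manipulation (so the fact that $u_N^2\notin V_N$ is irrelevant) is a nice clarification that the paper leaves implicit.
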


\begin{proof} 
Choose \(\phi \equiv 1\) in \eqref{eq:fsg}, then using the property  \eqref{eqn_fracortho} of the fractional Laplacian and $\mathcal D^\alpha \phi =0$ yields
\[
\frac{d}{dt} \int_I  u_N \,  dx = -\kappa_2  \int_I u_N \mathcal{D}^\alpha (u_N)_x \,dx  =0,
\]
which proves \eqref{con0}.
% Now, first use property \eqref{eqn_fracsymm} in \eqref{eq:fsg}, then choose $\phi \equiv 1$, and eventually using the property \eqref{eqn_fracortho} and periodicity of $u_N$, we have 
% \[
% \frac{d}{dt} \int_I \left( u_N + \mathcal{D}^\alpha u_N \right)  dx = -\kappa_2 \left[ \int_I (\mathcal{D}^{\alpha}u_N^2)_x \,dx  + \int_I u_N \mathcal{D}^\alpha (u_N)_x \,dx \right] =0,
% \]
% which proves \eqref{con1}.
Next, choose \(\phi = u_N \) in \eqref{eq:fsg}, and using the fractional Laplacian property \eqref{eqn_fracsymm}, we get 
\[
 \frac{1}{2} \frac{d}{dt} \bigl( \|u_N\|^2 + \|\mathcal{D}^{\alpha/2} u_N\|^2 \bigr) = \bigl(f(u_N), (u_N)_x\bigr) - \kappa_2 \left[2\bigl( \mathcal{D}^{\alpha} (u_{N} (u_{N})_{x}), u_N \bigr) + \bigl( u_{N} \mathcal{D}^{\alpha} (u_{N})_{x}, u_N \bigr)\right].
\]
 In the right-hand side of the above equation, we employ integration by parts, periodicity of $u_N$, and property \eqref{eqn_fracsymm}, which implies
\begin{align*}
\bigl(f(u_N), (u_N)_x\bigr) &= \int_I  \left( -\frac{\kappa_1}{2} u_N^2 - \frac{1}{2} u_N^3 \right)_x dx = 0, \\
2\kappa_2 \bigl( \mathcal{D}^{\alpha} (u_{N} (u_{N})_{x}), u_N \bigr) + \kappa_2 \bigl( u_{N} \mathcal{D}^{\alpha} (u_{N})_{x}, u_N \bigr) &= -\kappa_2 \bigl(u_{N}^2, \mathcal{D}^{\alpha}(u_N)_x \bigr) + \kappa_2 \bigl( \mathcal{D}^{\alpha} (u_{N})_{x}, u_N^2 \bigr) = 0.
\end{align*}
Thus
\[
 \frac{d}{dt} \int_I \left( u_N^2 + |\mathcal D^{\alpha/2} u_N|^2 \right) dx = 0.
\]
\end{proof}

\subsection{Convergence analysis of the FSG scheme}

The convergence analysis of the general fCH equation \eqref{eq:fch} presents substantial challenges due to the complex nonlinear coupling between the solution $u$ and the fractional operator $\mathcal{D}^\alpha$ in the terms $\mathcal{D}^\alpha(u u_x)$ and $u \mathcal{D}^\alpha u_x$. These nonlinearities introduce analytical difficulties in establishing full well-posedness and convergence results for the spectral method. To overcome these obstacles while retaining essential fractional dynamics, we consider the special case $\kappa_2 =0$, the fBBM equation \eqref{eq:fbbm}. This simplified model eliminates the problematic nonlinear coupling terms while preserving the fractional dissipation through $\mathcal{D}^\alpha u_t$. The resulting structure allows us to establish full convergence results; we prove that the spectral solution $u_N$ converges to the unique solution of the fBBM equation \eqref{eq:fbbm} using compactness arguments.

\begin{theorem}[Well-posedness]\label{thm:wellposed}
For initial data \(u_{0}\in H_{\mathrm{per}}^{r}(I)\) with \(r\geq \alpha\) and \(T>0\) is finite, there exists a unique solution
\(
u\in C^{1}\!\bigl([0,T];H_{\mathrm{per}}^{r}(I)\bigr)
\)
to \eqref{eq:fbbm}.
\end{theorem}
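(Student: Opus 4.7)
The plan is to recast \eqref{eq:fbbm} as an abstract ODE on the Banach space $H^r_{\mathrm{per}}(I)$ and then combine Picard--Lindelöf local existence with a Gronwall-type global a priori bound. Since $(I + \mathcal{D}^\alpha)$ has Fourier symbol $1 + |k|^\alpha \geq 1$, it is boundedly invertible on every $H^s_{\mathrm{per}}(I)$, and applying its inverse to \eqref{eq:fbbm} produces the equivalent equation
\begin{equation*}
u_t = F(u), \qquad F(u) := -(I + \mathcal{D}^\alpha)^{-1}\partial_x\bigl(u + \tfrac{1}{2}u^2\bigr).
\end{equation*}
The crucial observation is that $(I + \mathcal{D}^\alpha)^{-1}\partial_x$ has symbol $ik/(1 + |k|^\alpha)$, which is uniformly bounded whenever $\alpha \geq 1$; hence it is bounded on each $H^s_{\mathrm{per}}(I)$. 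Because $r \geq \alpha \geq 1 > 1/2$, the space $H^r_{\mathrm{per}}(I)$ is also a Banach algebra, so $F$ maps $H^r_{\mathrm{per}}(I)$ into itself.

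For local existence and uniqueness, I would verify that $F$ is locally Lipschitz on $H^r_{\mathrm{per}}(I)$ via the algebra estimate $\|u^2 - v^2\|_r \leq (\|u\|_r + \|v\|_r)\|u - v\|_r$ combined with the operator bound above. The abstract Picard--Lindelöf theorem in the Banach space $H^r_{\mathrm{per}}(I)$ then produces a unique maximal-in-time solution $u \in C^1([0, T_{\max}); H^r_{\mathrm{per}}(I))$ with $T_{\max} > 0$.

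To extend this solution to the prescribed interval $[0, T]$ I must rule out finite-time blow-up of $\|u(t)\|_r$. The energy computation in Theorem \ref{thm:conservation}, carried out at the continuous level by testing \eqref{eq:fbbm} against $u$ itself, yields a uniform bound $\|u(t)\|_{\alpha/2} \leq C_0$. For $\alpha > 1$ the Sobolev embedding $H^{\alpha/2}(I) \hookrightarrow L^\infty(I)$ converts this into a uniform bound $\|u(t)\|_\infty \leq C_1$; coupling this with the sharp product estimate $\|u^2\|_r \leq C\|u\|_\infty\|u\|_r$ gives
\begin{equation*}
\tfrac{d}{dt}\|u\|_r \leq \|F(u)\|_r \leq C\bigl(1 + \|u\|_\infty\bigr)\|u\|_r \leq C_2\|u\|_r,
\end{equation*}
so Gronwall delivers $\|u(t)\|_r \leq \|u_0\|_r e^{C_2 t}$ on $[0, T]$. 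This closes the continuation argument and forces $T_{\max} \geq T$; uniqueness transfers directly from the Picard construction.

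The main obstacle is the endpoint $\alpha = 1$, where energy conservation only controls $\|u\|_{H^{1/2}}$ and $H^{1/2}(I) \not\hookrightarrow L^\infty(I)$, so the $L^\infty$ bootstrap above fails. To recover the $H^r$ bound in this borderline case I would instead perform the energy estimate directly at the $H^r$ level and control the nonlinear term via a Kato--Ponce-type commutator estimate on $[\mathcal{D}^r, u]\partial_x u$ in $L^2$, which avoids invoking an $L^\infty$ bound on $u$. This commutator bookkeeping is the most delicate part of the argument.
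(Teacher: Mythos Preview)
Your approach differs substantially from the paper's. The paper proves Theorem~\ref{thm:wellposed} \emph{constructively} through the spectral scheme itself: Lemma~\ref{conv_lemma} derives uniform $H^\alpha$-bounds on the Galerkin approximants $u_N$, extracts a convergent subsequence via Arzel\`a--Ascoli, identifies the limit as a solution, and proves uniqueness by a Gronwall argument on the difference of two solutions; extension from the local interval $[0,\bar T]$ to arbitrary finite $[0,T]$ is handled in Remark~\ref{remk_extnsn} by iterating the local step and invoking an external uniform-in-time $H^\alpha$ bound from \cite{amaral2022existence}. Your route---inverting $(I+\mathcal D^\alpha)$ to obtain an abstract ODE with locally Lipschitz right-hand side on the Banach algebra $H^r_{\mathrm{per}}(I)$ and applying Picard--Lindel\"of---is the standard direct argument. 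It is cleaner for pure well-posedness, while the paper's construction has the complementary virtue of simultaneously establishing convergence of the numerical method, which is the article's real objective. For $\alpha>1$ your global continuation via the conserved $H^{\alpha/2}$ energy and the embedding $H^{\alpha/2}\hookrightarrow L^\infty$ is correct and in fact more self-contained than the paper's.

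There is, however, a genuine gap at $\alpha=1$. The standard Kato--Ponce commutator estimate gives
\[
\bigl\|[\mathcal D^r,u]\,u_x\bigr\|_{L^2}\le C\bigl(\|u_x\|_{L^\infty}\|\mathcal D^{r-1}u_x\|_{L^2}+\|\mathcal D^r u\|_{L^2}\|u_x\|_{L^\infty}\bigr),
\]
and the remaining piece $(u\,\mathcal D^r u_x,\mathcal D^r u)=-\tfrac12\int u_x(\mathcal D^r u)^2$ also needs $\|u_x\|_{L^\infty}$. The commutator bookkeeping therefore does not avoid an $L^\infty$ hypothesis; it trades a bound on $u$ for one on $u_x$, which the conserved $H^{1/2}$ energy controls no better. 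The paper does not close this endpoint self-containedly either---Remark~\ref{remk_extnsn} defers to \cite{amaral2022existence}---so to complete your argument at $\alpha=1$ you would likewise need that external bound or a separate a priori estimate specific to the regularized Benjamin--Ono structure.
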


The constructive proof of Theorem~\ref{thm:wellposed} follows from the next lemma together with the extension argument formulated in Remark~\ref{remk_extnsn}.

\begin{lemma}\label{conv_lemma}
  Let $u_N$ denote the approximate solution to the fBBM equation \eqref{eq:fbbm}  obtained via the semi-discrete FSG scheme \eqref{eq:fsg} with parameters $(\kappa_1, \gamma, \kappa_2) =(1, 1/3,0)$. For initial data $u_0 \in H^\alpha_{\mathrm{per}}(I)$, there exists a time $\bar T$ and a positive constant $C = C(\|u_0\|_\alpha)$ such that the following bounds hold
    \begin{align}
        \label{bd1} \|u_N(t)\| &\leq C, \\
        \label{bd2} \|u_N(t)\|_{\alpha} &\leq C, \\
        \label{bd3} \|(u_N)_t(t)\|_{\alpha} &\leq C,
    \end{align}
    for all $t \in [0, \bar T]$.
    Furthermore, as $N \to \infty$, the sequence $\{u_N\}$ converges to the unique solution of the fBBM equation \eqref{eq:fbbm} in the topology $ C^1([0, \bar T];H^{\alpha}_{\mathrm{per}}(I)).$ 
\end{lemma}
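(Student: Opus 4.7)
Estimate \eqref{bd1} is immediate from Theorem \ref{thm:conservation}: the conserved quantity $\|u_N\|^2+\|\mathcal{D}^{\alpha/2}u_N\|^2$ equals its initial value, which is bounded by $C\|u_0\|_\alpha^2$ via the $L^2$-stability of $P_N$.  To upgrade to the $H^\alpha$ bound \eqref{bd2}, I would test the semi-discrete scheme \eqref{eq:fsg} (with $\kappa_2=0$) against $\phi=\mathcal{D}^\alpha u_N\in V_N$.  Self-adjointness \eqref{eqn_fracsymm} and the identity \eqref{eqn_fracortho} eliminate the linear advection term, giving
\begin{equation*}
\frac{1}{2}\frac{d}{dt}\bigl(\|\mathcal{D}^{\alpha/2}u_N\|^2+\|\mathcal{D}^\alpha u_N\|^2\bigr) = -\bigl(u_N(u_N)_x,\mathcal{D}^\alpha u_N\bigr).
\end{equation*}
Because $\alpha\geq 1>1/2$, the Sobolev embedding $H^\alpha_{\mathrm{per}}(I)\hookrightarrow L^\infty(I)$ yields $\|u_N\|_\infty\leq C\|u_N\|_\alpha$, so the right-hand side is bounded by $C\|u_N\|_\alpha^3$.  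Combined with the conserved $L^2$-mass, this produces a differential inequality of the form $\frac{d}{dt}\|u_N\|_\alpha^2\leq C(1+\|u_N\|_\alpha^2)^{3/2}$, whose superlinear nature gives a finite $\bar T=\bar T(\|u_0\|_\alpha)>0$ on which $\|u_N\|_\alpha$ remains uniformly bounded.

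\textbf{Time-derivative bound.}  Writing \eqref{eq:fsg} in Fourier coefficients gives $(1+|m|^\alpha)\widehat{(u_N)_t}(m)=-im\,\widehat{f(u_N)}(m)$ for $|m|\leq N$.  The multiplier $m(1+|m|^2)^{\alpha/2}/(1+|m|^\alpha)$ is $O(1+|m|)$ whenever $\alpha\geq 1$, so Parseval combined with the $H^1$-algebra property and \eqref{bd2} gives $\|(u_N)_t\|_\alpha\lesssim\|f(u_N)\|_1\lesssim\|u_N\|_\alpha(1+\|u_N\|_\alpha)\leq C$, which is \eqref{bd3}.

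\textbf{Compactness and uniqueness.}  With \eqref{bd1}--\eqref{bd3}, the family $\{u_N\}$ is uniformly bounded in $W^{1,\infty}(0,\bar T;H^\alpha_{\mathrm{per}})$.  The compact embedding $H^\alpha_{\mathrm{per}}\Subset H^s_{\mathrm{per}}$ for any $s<\alpha$ combined with Arzelà--Ascoli furnishes a subsequence converging in $C([0,\bar T];H^s_{\mathrm{per}})$; taking $s>1/2$ gives enough regularity to pass to the limit in the quadratic nonlinearity $u_N(u_N)_x$ and identify the limit $u$ as a solution of \eqref{eq:fbbm} lying in $L^\infty(0,\bar T;H^\alpha_{\mathrm{per}})$.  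Uniqueness is then obtained by taking the difference $w=u-v$ of two such solutions, noting $uu_x-vv_x=uw_x+wv_x$, testing the resulting equation against $w$ (exploiting the invariant $\|w\|^2+\|\mathcal{D}^{\alpha/2}w\|^2$), controlling the transport term via Lemma \ref{lem:product_estimate} and Sobolev embedding, and invoking Grönwall.  Uniqueness promotes subsequential convergence to convergence of the full sequence.

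\textbf{Main obstacle.}  The delicate step is strengthening convergence to the strong $C^1([0,\bar T];H^\alpha_{\mathrm{per}})$ topology.  I plan to carry out a direct error analysis for $e_N:=u_N-P_N u$: subtracting the continuous fBBM equation tested against $\phi\in V_N$ from \eqref{eq:fsg} and exploiting the commutation \eqref{eqn_projcom}, the residual reduces to the $(I-P_N)$-image of $f(u)_x$ (which vanishes in $H^{\alpha-1}$ as $N\to\infty$ since $u\in C([0,\bar T];H^\alpha_{\mathrm{per}})$, by density) together with the nonlinear mismatch $u_N(u_N)_x-P_N(uu_x)$, split as $e_N(u_N)_x+(P_Nu)(e_N)_x+(I-P_N)(uu_x)$ and estimated in $H^\alpha$ via Lemma \ref{lem:product_estimate}.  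A Grönwall closure on $\|e_N(t)\|_\alpha^2$, joined with the approximation property $\|u-P_Nu\|_\alpha\to 0$ uniformly on $[0,\bar T]$, yields $u_N\to u$ in $C([0,\bar T];H^\alpha_{\mathrm{per}})$.  The corresponding convergence of time derivatives is then immediate from the Fourier-multiplier argument of Step 2 applied to the difference equation, giving $\|(u_N)_t-u_t\|_\alpha\lesssim\|f(u_N)-f(u)\|_1\to 0$ and thereby closing the $C^1$ convergence.
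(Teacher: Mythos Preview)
Your proposal follows the same architecture as the paper's proof---energy estimates obtained by testing \eqref{eq:fsg} with $\mathcal{D}^\alpha u_N$, a local-in-time bound from a superlinear differential inequality, compactness via Arzel\`a--Ascoli, limit identification, and Gr\"onwall-based uniqueness---but several steps are executed differently. For \eqref{bd3} you read off the bound from the Fourier multiplier $m(1+|m|^2)^{\alpha/2}/(1+|m|^\alpha)=O(1+|m|)$ together with the $H^1$-algebra property, whereas the paper tests the scheme with $\phi=(\mathcal{D}^\alpha u_N)_t$ and uses Young's inequality to absorb the right-hand side; your route is more direct. For uniqueness you close on the energy $\|w\|^2+\|\mathcal{D}^{\alpha/2}w\|^2$ by testing against $w$, while the paper tests against $\mathcal{D}^\alpha v$ and works with $\|\mathcal{D}^{\alpha/2}v\|^2+\|\mathcal{D}^\alpha v\|^2$; your version will need an explicit $\|w\|_{L^4}^2\lesssim\|w\|_{\alpha/2}^2$ step to control $(u_x,w^2)$, since $u_x\notin L^\infty$ when $\alpha$ is close to $1$. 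The most substantive difference concerns the passage to strong $C^1([0,\bar T];H^\alpha_{\mathrm{per}})$ convergence: the paper invokes Arzel\`a--Ascoli directly in $C([0,\bar T];H^\alpha_{\mathrm{per}})$ and concludes the $C^1$ statement in one line, whereas you (more carefully) obtain compactness only in $C([0,\bar T];H^s_{\mathrm{per}})$ for $s<\alpha$ and then run a separate Gr\"onwall argument on $e_N=u_N-P_Nu$ to upgrade to $H^\alpha$, finally recovering convergence of time derivatives via the multiplier representation. Both strategies succeed; yours is more laborious but addresses the point that mere $H^\alpha$-boundedness does not by itself yield pointwise relative compactness in $H^\alpha$. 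One caution: the equation for $e_N$ in $V_N$ reads $(I+\mathcal{D}^\alpha)(e_N)_t=P_N\partial_x[f(u)-f(u_N)]$, so the forcing is governed by $\|u-P_Nu\|_\alpha\to 0$ (uniform on $[0,\bar T]$ by compactness of $t\mapsto u(t)$ in $H^\alpha_{\mathrm{per}}$) rather than by an $(I-P_N)f(u)_x$ term as you wrote; the Gr\"onwall closure still goes through once this is straightened out.
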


\begin{proof} 
The bound \eqref{bd1} follows from the Theorem \eqref{thm:conservation} for all $t>0$.  
To bound \(\norm{u_N(t)}_{\alpha}\), consider the FSG scheme \eqref{eq:fsg} with parameters  $(\kappa_1, \gamma, \kappa_2) =(1, 1/3,0)$ and choose test function \(\phi = \mathcal{D}^{\alpha} u_N\). We have
\begin{align*}
\left( (u_N)_t, \mathcal{D}^{\alpha} u_N \right) + \left( \mathcal{D}^{\alpha} (u_N)_t, \mathcal{D}^{\alpha} u_N \right) = \left( f(u_N), (\mathcal{D}^{\alpha} u_N)_x \right) =\left( u_N, \mathcal{D}^{\alpha} (u_N)_x \right) +\frac{1}{2}  \left( (u_N)^2, \mathcal{D}^{\alpha} u_N \right).
\end{align*}
Since $u_N$ is periodic, using integration by parts, property \eqref{eqn_fracortho}, and the Cauchy-Schwarz inequality,  we have
\begin{align*}
\frac{1}{2}\frac{d}{dt}\bigl(\|\mathcal{D}^{\frac{\alpha}{2}} u_N\|^2+\|\mathcal{D}^{\alpha} u_N\|^2\bigr)=  -  \left( u_N (u_N)_x, \mathcal{D}^{\alpha} u_N \right)  \leq    \|u_N\|_{\infty} \|(u_N)_x\|\| \mathcal{D}^{\alpha} u_N\|.
\end{align*}
Employing Sobolev inequalities $\|u_N\|_{\infty}\leq C\|u_N\|_{\alpha}$  and  $\|(u_N)_x\|\leq C\|\mathcal D^\alpha u_N\|\leq C\|u_N\|_{\alpha}$ from \cite[Lemma A.1]{dwivedi2024numerical}, Young's inequality, and then adding some positive quantity to the right-hand side, we have
\begin{equation}
\begin{aligned}
\frac{1}{2}\frac{d}{dt}\bigl(\|\mathcal{D}^{\frac{\alpha}{2}} u_N\|^2+\|\mathcal{D}^{\alpha} u_N\|^2\bigr)&\leq   C \|u_N\|_{\alpha}^2\| \mathcal{D}^{\alpha} u_N\| \leq  C\|u_N\|_{\alpha}^4 + C \| \mathcal{D}^{\alpha} u_N\|^2 \\&  \leq C\bigl(\| u_N\|_{\frac{\alpha}{2}}^2+\|u_N\|_{\alpha}^2\bigr) + C\bigl(\| u_N\|_{\frac{\alpha}{2}}^2+\|u_N\|_{\alpha}^2\bigr)^2\label{eqn_bddp2}  .
\end{aligned}
\end{equation}
Applying analogous energy estimates to the lower-order derivative terms in the full Sobolev norm and combining with \eqref{eqn_bddp2} yields
\begin{align}\label{eqn:de1}
\frac{1}{2}\frac{d}{dt}\bigl(\| u_N\|_{\frac{\alpha}{2}}^2+\| u_N\|_{\alpha}^2\bigr)\leq  C( \|u_N\|_{\frac{\alpha}{2}}^2+\| u_N\|_{\alpha}^2 ) + C( \|u_N\|_{\frac{\alpha}{2}}^2+\| u_N\|_{\alpha}^2 )^2.
\end{align}
Define the functional $E(t) := \|u_N(t)\|_{\frac{\alpha}{2}}^2 + \|u_N(t)\|_{\alpha}^2$. From the differential inequality \eqref{eqn:de1}, we have
\begin{align}
\frac{dE}{dt} \leq 2C E + 2C E^2 = 2C E(1 + E),
\end{align}
with initial condition \(E(0) = E_0 := \|u_0\|_{\frac{\alpha}{2}}^2 + \|u_0\|_{\alpha}^2\). Consider the differential equation
\begin{equation}
\frac{dz}{dt} = 2C z(1 + z), \quad z(0) = E_0,
\end{equation}
where C is independent of $t$. The solution $z(t) = (E_0 e^{2C t})/(1 + E_0 - E_0 e^{2C t})$ is increasing and convex for all time  \(t < \frac{1}{2C} \ln(1 + E_0^{-1})=:T_{b}\). Define $
\bar T := \frac{T_b}{2} $. For $t\leq \bar T$, we have 
\begin{align}
e^{2C t} \leq e^{2C \bar T} = \left(1 + \frac{1}{E_0}\right)^{1/2},
\end{align}
which yields the bound
\begin{align}
z(t) \leq z(\bar T) = \frac{E_0 (1 + E_0^{-1})^{1/2}}{1 + E_0 - E_0 (1 + E_0^{-1})^{1/2}} \leq 2(1 + E_0).
\end{align}
Now consider the function $w(t) = z(t) - E(t)$. Then
\begin{align*}
\frac{dw}{dt} &= \frac{dz}{dt} - \frac{dE}{dt} \geq 2C z(1 + z) - 2C E(1 + E)= 2C \left[ (z - E) + (z^2 - E^2) \right]  = 2C w \left(1 + z + E \right).
\end{align*}
Since $z(t) > 0$ and $E(t) \geq 0$ for $t < \frac{1}{2C} \ln(1 + E_0^{-1})$, we have $1 + z + E > 0$. With the initial condition $w(0) = z(0) - E(0) = 0$, the differential inequality $\frac{dw}{dt} \geq K(t) w$ (where $K(t) = 2C(1 + z + E) > 0$) implies $w(t) \geq 0$ for $t \in [0, \bar T]$ by the following argument. 
Assume, for the sake of contradiction, that there exists some \( t_1 > 0 \) such that \( w(t_1) < 0 \). Since \( w(0) = 0 \) and \( w \) is continuous, let \( t_0 = \inf \{ t > 0 : w(t) < 0 \} \). By continuity, \( w(t_0) = 0 \), and there exists \( \delta > 0 \) such that \( w(t) < 0 \) for all \( t \in (t_0, t_0 + \delta) \).

For \( t \in (t_0,t_0+\delta) \), since \( w(t) < 0 \) and \( K(t) = 2C(1 + z + E) > 0 \), the inequality \( \frac{dw}{dt} \geq K(t) w \) holds. Set \( y(t) = -w(t) > 0 \). Then \( \frac{dw}{dt} = -\frac{dy}{dt} \), and substituting gives
\(
\frac{dy}{dt} \leq K(t) y.
\)
Define \( v(t) = y(t) \exp\left( -\int_{t_0}^t K(s)  ds \right) \). Then
\[
\frac{dv}{dt} = \left( \frac{dy}{dt} - K(t) y \right) \exp\left( -\int_{t_0}^t K(s)  ds \right) \leq 0,
\]
since \( \frac{dy}{dt} \leq K(t) y \). Thus, \( v \) is non-increasing. At \( t = t_0 \), \( y(t_0) = -w(t_0) = 0 \), so \( v(t_0) = 0 \). Therefore, for \( t > t_0 \), \( v(t) \leq v(t_0) = 0 \). However, since \( y(t) > 0 \) and \( \exp\left( -\int_{t_0}^t K(s)  ds \right) > 0 \) for \( t > t_0 \), it follows that \( v(t) > 0 \). This contradiction implies that \( w(t) \geq 0 \) for all \( t \in [0, \bar{T}] \).

Thus $ E(t)\leq z(t)\leq 2(1+E_0)$ on $[0, \bar T]$.
Therefore, we have
\begin{equation}
    \| u_N(t)\|_{\frac{\alpha}{2}}^2+\| u_N(t)\|_{\alpha}^2\leq C(\| u_0\|_{\alpha}), 
\end{equation}
for all $t\leq \bar T$. This proves the bound \eqref{bd2}. Now consider the FSG scheme \eqref{eq:fsg} again with parameters $(\kappa_1, \gamma, \kappa_2) =(1, 1/3,0)$, and choose test function $\phi = (\mathcal{D}^{\alpha}u_N)_t$ that yields
\begin{align*}
\left( (u_N)_t, (\mathcal{D}^{\alpha}u_N)_t \right) + \left( \mathcal{D}^{\alpha} (\mathcal{D}^{\alpha}u_N)_t, (u_N)_t \right) = \left( f(u_N), (\mathcal{D}^{\alpha}u_N)_{tx} \right).
\end{align*}
Once again by the integration by parts and using the periodicity $u_N$, the Cauchy-Schwarz inequality, and the Sobolev inequalities \cite[Lemma A.1]{dwivedi2024numerical}, we have
\begin{align*}
\| \mathcal{D}^{\frac{\alpha}{2}}(u_N)_t\|^2+\| \mathcal{D}^{\alpha}(u_N)_t\|^2&= - \left( (u_N)_x, (\mathcal{D}^{\alpha}u_N)_t\right) -  \left( u_N (u_N)_x, \mathcal{D}^{\alpha}(u_N)_t \right) \\& \leq   \|(u_N)_x\| \|\mathcal{D}^{\alpha}(u_N)_t\| + \|u_N\|_{\infty} \|(u_N)_x\|\| \mathcal{D}^{\alpha}(u_N)_t\|\\& \leq \|\mathcal D^{\alpha}u_N\| \|\mathcal{D}^{\alpha}(u_N)_t\| + \|u_N\|_{\alpha}^2\| \mathcal{D}^{\alpha}(u_N)_t\| \\& 
\leq C_\delta\|\mathcal D^{\alpha}u_N\|^2  + \delta \|\mathcal{D}^{\alpha}(u_N)_t\|^2 + C_\delta\|u_N\|_{\alpha}^4 + \delta \| \mathcal{D}^{\alpha}(u_N)_t\|^2,
\end{align*}
where we have used Young's inequality and $\delta>0$ is a small constant. Eventually, using the bound \eqref{bd2}, we have  
\begin{align*}
\| (\mathcal{D}^{\frac{\alpha}{2}}u_N)_t\|^2+\| (\mathcal{D}^{\alpha}u_N)_t\|^2 \leq C,
\end{align*}
for all $t\leq \bar T$. Similarly, we get the estimate for the lower-order derivatives. This proves the bound \eqref{bd3}.

The bound \eqref{bd3} implies that $u_{N} \in \text{Lip}([0, \bar T];H^{\alpha}_{\mathrm{per}}(I))$ for all $N$. Then using \eqref{bd1}, we employ an application of the Arzelà-Ascoli theorem, which ensures the sequential compactness of $\{u_{N}\}_{N\in\N}$ in $C([0, \bar T];H^{\alpha}_{\mathrm{per}}(I))$. Consequently, there exists a subsequence $N_k$ such that
    \begin{equation}\label{unifcon}
        u_{N_k} \to \bar u \text{ uniformly in } C([0, \bar T];H^{\alpha}_{\mathrm{per}}(I)) \text{ as } N_k \to \infty.
    \end{equation}
    We now show that $\bar u$ is the unique solution of the fBBM equation \eqref{eq:fbbm}. First, claim that $\bar u$ satisfies the following 
    \begin{equation}
\begin{aligned}
 \int_0^{\bar T} \int_I (\bar u +\mathcal{D}^\alpha \bar u) \varphi_t \,dx\,dt + \int_I (u_0+\mathcal{D}^\alpha u_0)\varphi(\cdot, 0)\,dx = \int_0^{\bar T}\int_I f(\bar u) \varphi_x \,dx\,dt,
\end{aligned}
\end{equation}
for all $\phi \in C_c^\infty(I \times  [0,\bar T])$. Consider a function $\varphi \in C_c^\infty(I \times  [0,\bar T])$. Using the projection $P_N \varphi(\cdot, t)$ as the test function in the FSG scheme \eqref{eq:fsg} and integrating over $[0,\bar T]$, we get
\begin{equation}\label{eqn:prjtest}
    \int_0^{\bar T}\int_I ((u_N)_t +\mathcal{D}^\alpha (u_N)_t) P_N \varphi\, dt = \int_0^{\bar T} \int_I f(u_N) (P_N \varphi)_x \,dt.
\end{equation}
Integrating by parts in time for the left-hand side gives
\begin{align*}
\int_0^{\bar T} \int_I(u_N)_t \varphi \,dx\, dt &= - \int_0^{\bar T} \int_I u_N \varphi_t \,dx\,dt - \int_I u_N(\cdot, 0)\varphi(\cdot, 0)\,dx, \\
\int_0^{\bar T} \int_I \mathcal{D}^\alpha (u_N)_t \varphi\,dx\, dt &= - \int_0^{\bar T}\int_I \mathcal{D}^\alpha u_N \varphi_t\,dx\, dt - \int_I\mathcal{D}^\alpha u_N(\cdot, 0) \varphi(\cdot, 0)\,dx.
\end{align*}
For the right-hand side, we have
\begin{equation*}
\begin{aligned}
\int_0^{\bar T} \int_I f(u_N)  (P_N \varphi)_x \,dx\,dt = \int_0^{\bar T}\int_I f(u_N) \varphi_x \,dx\,dt - \int_0^{\bar T}\int_I f(u_N)_x  (P_N \varphi - \varphi)\, dx\,dt.
\end{aligned}
\end{equation*}
Now substituting the above identities in \eqref{eqn:prjtest} implies
\begin{equation}
\begin{aligned}
 \int_0^{\bar T} \int_I (u_N +\mathcal{D}^\alpha u_N) \varphi_t \,dx\,dt &+ \int_I (u_N(\cdot, 0)+\mathcal{D}^\alpha u_N(\cdot, 0))\varphi(\cdot, 0)\,dx\\& = -\int_0^{\bar T}\int_I f(u_N) \varphi_x \,dx\,dt + \int_0^{\bar T}\int_I f(u_N)_x  (P_N \varphi - \varphi)\, dx\,dt.
\end{aligned}
\end{equation}
Since $P_N\varphi \to \varphi$ and $u_{N} \to \bar{u}$ in $C([0, \bar T]; H^\alpha_{\mathrm{per}}(I))$, passing to the limit as $N \to \infty$ yields
\begin{equation}
\begin{aligned}
 \int_0^{\bar T} \int_I (\bar u +\mathcal{D}^\alpha \bar u) \varphi_t \,dx\,dt + \int_I (u_0+\mathcal{D}^\alpha u_0)\varphi(\cdot, 0)\,dx = \int_0^{\bar T}\int_I f(\bar u) \varphi_x \,dx\,dt,
\end{aligned}
\end{equation}
for all $\phi \in C_c^\infty(I \times  [0, \bar T])$, where initial conditions converge as $u_N(\cdot, 0) = P_N u_0 \to u_0$ and $\mathcal{D}^\alpha u_N(\cdot, 0) \to \mathcal{D}^\alpha u_0$. Finally, the regularity $\bar{u} \in  C^1([0,\bar T]; H^{\alpha}_{\mathrm{per}}(I))$ implies that $\bar u$ is actually a strong solution and satisfies the fBBM equation \eqref{eq:fbbm} as an $L^2$-identity.

For uniqueness, let $\bar{u}$ and $\bar{w}$ be solutions of the fBBM equation \eqref{eq:fbbm} with $\bar{u}(x, 0) = \bar{w}(x, 0) = u_0(x)$. Define $v = \bar{u} - \bar{w}$, which satisfies $v(x, 0) = 0$, periodic condition, and
\[
(1 + \mathcal{D}^\alpha) v_t = -(v_x + (\bar u\bar u_x- \bar w\bar w_x)).
\]
Taking the inner product with $\mathcal D^\alpha v$ and using Lemma \ref{lem:frac_properties}, we have
\begin{align*}
\frac{1}{2} \frac{d}{dt} \left( \|\mathcal{D}^{\frac{\alpha}{2}} v\|^2 + \|\mathcal{D}^{\alpha} v\|^2 \right)&= -(v_x + (\bar u\bar u_x- \bar w\bar w_x),\mathcal D^\alpha v)\\& = -(v\bar u_x+\bar wv_x, \mathcal D^\alpha v)\\& = -(\bar u_x,v\mathcal D^\alpha v) - (\bar w,v_x \mathcal D^\alpha v).
\end{align*}
The Cauchy-Schwarz inequality, Sobolev embeddings $\|v\|_{\infty}\leq C\| v\|_{\alpha} \leq C(\| \bar u\|_{\alpha} +\| \bar w\|_{\alpha}) $ and $\|v_x\|\leq C\| \mathcal D^\alpha v\|$ from \cite[Lemma A.1]{dwivedi2024numerical}, and Young's inequality imply
\[
\left| (\bar u_x,v\mathcal D^\alpha v) + (\bar w,v_x \mathcal D^\alpha v) \right| \leq K (\|v\|_{\infty}\|\mathcal D^\alpha v\| + \|v_x\|\|\mathcal D^\alpha v\|)\leq KC \left( \|\mathcal D^\alpha v\|^2 + \|\mathcal{D}^{\frac{\alpha}{2}} v\|^2 \right),
\]
where $K = C \sup_{t \in [0,\bar T]} (\|\bar{u}(t)\|_{\alpha} + \|\bar{w}(t)\|_{\alpha})$. The Gr\"onwall's inequality with $v(0) = 0$ yields $v = 0$. Thus, $\bar{u}$ is unique, and the entire sequence $u_N$ converges to $\bar{u}$ in $C^1([0, \bar T]; H^{\alpha}_{\mathrm{per}}(I))$.
\end{proof}

\begin{remark}\label{remk_extnsn}
Sj\"oberg’s technique \cite{sjoberg1970korteweg} can be adapted to extend the local existence result for the fBBM equation to any finite time interval.  The crucial point is that the length of the existence interval \([0,\bar{T}]\) depends only on \(\|u_{0}\|_{H_{\mathrm{per}}^{\alpha}}\). Since the $H_{\mathrm{per}}^{\alpha}$-norm of the exact fBBM solution is uniformly bounded in time \cite{amaral2022existence}, we can extend the solution iteratively, and one may piece together successive local solutions to obtain a global solution.  More precisely, at time \(\bar{T}\) we define a new initial condition by projection,
\[
u_{\bar{T}}:=P_{N}u(\cdot,\bar{T}),
\]
and solve the fBBM equation with data \(u_{\bar{T}}\) on the interval \([\bar{T},2\bar{T}]\).  Since each such step preserves the \(H_{\mathrm{per}}^{\alpha}\)-norm, repeating this procedure inductively yields a solution on \([0,T]\) for a given finite \(T>0\). Also see the approach given in \cite{constantin1998wave} for finite time wave breaking for similar nonlinear nonlocal shallow water equations.
\end{remark}

\section{Optimal error analysis}\label{sec4}

This section establishes the spectral convergence of the FSG scheme \eqref{eq:fsg} for the fCH equation \eqref{eq:fch}. To derive optimal spectral convergence rates, we introduce an \textit{a priori} assumption on the approximation error and subsequently justify its validity through a continuity argument.

Assume the exact solution $u$ of \eqref{eq:fch} possesses sufficient regularity such that $u \in C([0,T]; H^r(I))$ with $r \geq \alpha + 2$. We assume the following error bound for the semi-discrete approximation:
\begin{equation}\label{prioriass}
    \|u - u_N\| \leq CN^{-(\alpha + 3/2)}.
\end{equation}
A direct consequence of inequality \eqref{inv_inq} and the approximation property \eqref{Prop_Nr} yields 
\begin{equation}\label{ass1uu_h2}
    \|u - u_N\|_{\infty} \leq CN^{-(\alpha + 1)}.
\end{equation}
The admissibility of \eqref{prioriass} is established in the Remark \ref{Ass_vald}. Define the projection error $\mathcal{E}_N(t) \in V_N$ as  
\begin{equation*}
    \mathcal{E}_N(t) := P_N u(t) - u_N(t), \quad \mathcal{E}_N(0) = P_N u_0 - u_N(0) = 0.
\end{equation*}
This quantity represents the discrepancy between the spectral projection of the exact solution and the numerical approximation.

\begin{theorem}[Spectral convergence]\label{thm:spectral_accuracy}
Let $u$ denote the exact solution of \eqref{eq:fch} satisfying $u \in C([0,T]; H^r(I))$ with regularity index $r \geq \alpha + 2$, and let $u_N$ be the semi-discrete approximation obtained via \eqref{eq:fsg}. There exists a constant $C = C(T, r, \|u\|_{C([0,T];H^r)})$, independent of the discretization parameter $N$, such that  
\begin{equation}\label{eq:L2_error_bound}
  \|u(t) - u_N(t)\| \leq CN^{-r},
\end{equation}
for all $t<T$. If, in addition, the initial data \(u_0\) is real-analytic, then there exist positive constants \(C\) and \(c\), independent of \(N\), such that
\[
\|u(t)-u_N(t)\|_{L^2}\le C\,\mathrm{e}^{-cN},
\]
for all $t<T$.
\end{theorem}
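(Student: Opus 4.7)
The plan is to split the error through the spectral projection,
\[
u(t) - u_N(t) = \bigl(u(t) - P_N u(t)\bigr) + \mathcal{E}_N(t) =: \eta_N(t) + \mathcal{E}_N(t),
\]
handling $\eta_N$ by Lemma~\ref{Prop_Nr} (so that $\|\eta_N\| \le C N^{-r}\|u\|_r$) and reducing the task to proving $\|\mathcal{E}_N(t)\| \le C N^{-r}$ on $[0,T]$. Since $P_N$ commutes with $\partial_t$ and with $\mathcal{D}^\alpha$ by Lemma~\ref{lem:frac_properties}(\ref{compro}), applying $P_N$ to \eqref{eq:fch}, pairing against an arbitrary $\phi \in V_N$, and subtracting \eqref{eq:fsg} yields the error equation
\[
((\mathcal{E}_N)_t,\phi) + ((\mathcal{E}_N)_t,\mathcal{D}^\alpha\phi) = (f(u)-f(u_N),\phi_x) - \kappa_2\bigl[2(uu_x - u_N(u_N)_x,\mathcal{D}^\alpha\phi) + (u\mathcal{D}^\alpha u_x - u_N \mathcal{D}^\alpha (u_N)_x,\phi)\bigr],
\]
with $\mathcal{E}_N(0) = 0$ by construction.

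Testing with $\phi = \mathcal{E}_N$ and invoking \eqref{eqn_fracsymm} reduces the left-hand side to $\tfrac{1}{2}\tfrac{d}{dt}(\|\mathcal{E}_N\|^2 + \|\mathcal{D}^{\alpha/2}\mathcal{E}_N\|^2)$, and I would then bound the nonlinear residuals on the right-hand side by substituting $u - u_N = \eta_N + \mathcal{E}_N$ in every factor, partitioning each term into a purely-$\mathcal{E}_N$ piece and a consistency piece carrying at least one factor of $\eta_N$, $(\eta_N)_x$, or $\mathcal{D}^\alpha(\eta_N)_x$. The purely-$\mathcal{E}_N$ cubic contributions from the two $\kappa_2$ terms collapse via exactly the identity used in the conservation proof of Theorem~\ref{thm:conservation}:
\[
2\kappa_2(\mathcal{E}_N(\mathcal{E}_N)_x,\mathcal{D}^\alpha \mathcal{E}_N) + \kappa_2(\mathcal{E}_N \mathcal{D}^\alpha (\mathcal{E}_N)_x,\mathcal{E}_N) = 0.
\]
For every remaining mixed term I would transfer $\mathcal{D}^\alpha$ onto $\mathcal{E}_N$ (or $\mathcal{E}_N$-containing factors) via \eqref{eqn_fracsymm}, integrate by parts to shed loose $x$-derivatives off $\mathcal{E}_N$, and apply the commutator-type estimate \eqref{eqn_lemma_comest_morm} together with Lemma~\ref{Prop_Nr} and its sharpened form \eqref{alpha_est}; the $L^\infty$ bounds on $u$ and $u_x$ are furnished by $u \in H^r \hookrightarrow W^{1,\infty}$ (since $r \ge \alpha + 2 > 3/2$), while the $L^\infty$ bound on $u_N$ is supplied by the a priori assumption \eqref{ass1uu_h2}.

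Assembling these bounds produces a differential inequality
\[
\tfrac{d}{dt}\bigl(\|\mathcal{E}_N\|^2 + \|\mathcal{D}^{\alpha/2}\mathcal{E}_N\|^2\bigr) \le C\bigl(\|\mathcal{E}_N\|^2 + \|\mathcal{D}^{\alpha/2}\mathcal{E}_N\|^2\bigr) + C N^{-2r},
\]
with vanishing initial data, and Gr\"onwall then delivers \eqref{eq:L2_error_bound}. The analytic case is structurally identical: Lemma~\ref{Prop_Nr} is used in its exponential form, \eqref{alpha_est2} replaces \eqref{alpha_est}, and the inhomogeneous Gr\"onwall term becomes $C e^{-2cN}$, yielding $\|u - u_N\| \le C e^{-cN}$. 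The principal obstacle I anticipate is the nonlocal mixed term $u\mathcal{D}^\alpha u_x - u_N\mathcal{D}^\alpha(u_N)_x$: because $\mathcal{D}^\alpha$ admits no Leibniz rule, this difference resists clean separation, and one must avoid introducing factors of $\|\mathcal{D}^\alpha \mathcal{E}_N\|$ (which the energy identity controls only after an inverse-inequality loss of $N^{\alpha/2}$ via Theorem~\ref{inv_sob}). It is precisely in handling this term that the symmetry \eqref{eqn_fracsymm}, the commutator estimate \eqref{eqn_lemma_comest_morm}, the refined bound \eqref{alpha_est}, and the $L^\infty$ control of $u_N$ through \eqref{ass1uu_h2} must cooperate; the a priori assumption \eqref{prioriass} itself is then recovered at the end by the continuity bootstrap of Remark~\ref{Ass_vald}.
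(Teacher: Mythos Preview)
Your skeleton is the paper's skeleton: split $u-u_N=\eta_N+\mathcal{E}_N$, test the error equation with $\phi=\mathcal{E}_N$, reduce the left-hand side to $\tfrac12\tfrac{d}{dt}(\|\mathcal{E}_N\|^2+\|\mathcal{D}^{\alpha/2}\mathcal{E}_N\|^2)$, bound the nonlinear residuals, apply Gr\"onwall, and close the a priori hypothesis by the bootstrap of Remark~\ref{Ass_vald}. The analytic case is handled exactly as you say. Where your route diverges from the paper is in the mechanism used to tame the nonlocal nonlinear terms.

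Your exact cancellation of the purely-$\mathcal{E}_N$ cubic piece is correct and elegant; the paper does \emph{not} use it. Conversely, for the remaining bilinear-in-$\mathcal{E}_N$ terms the paper does the opposite of what you propose: rather than avoiding factors like $\|(\mathcal{D}^\alpha\mathcal{E}_N)_x\|$, it deliberately introduces them via the inverse inequality $\|(\mathcal{D}^\alpha\mathcal{E}_N)_x\|\le CN^{\alpha+1}\|\mathcal{E}_N\|$ and then kills the $N^{\alpha+1}$ loss against the a priori bound $\|u-u_N\|_\infty\le CN^{-(\alpha+1)}$ from \eqref{ass1uu_h2} (and similarly $\|u-P_Nu\|_\infty\le CN^{1/2-r}$). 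This inverse-inequality/a-priori pairing, not commutator estimates, is the workhorse that reduces each of $\mathcal{I}_1,\mathcal{I}_2,\mathcal{I}_3$ to $CN^{-2r}+C\|\mathcal{E}_N\|^2+C\|\mathcal{D}^{\alpha/2}\mathcal{E}_N\|^2$; the product estimate \eqref{eqn_lemma_comest_morm} is never invoked in the paper's argument. The only place the paper integrates by parts in the spirit you describe is the single term $(P_Nu\,\mathcal{D}^{\alpha/2}\mathcal{E}_N,(\mathcal{D}^{\alpha/2}\mathcal{E}_N)_x)=-\tfrac12((P_Nu)_x,(\mathcal{D}^{\alpha/2}\mathcal{E}_N)^2)$. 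Your plan to handle terms such as $(u\,\mathcal{D}^\alpha(\mathcal{E}_N)_x,\mathcal{E}_N)$ purely through symmetry and \eqref{eqn_lemma_comest_morm} would in fact need a Kato--Ponce type commutator bound (on $[\mathcal{D}^{\alpha/2},u]$ composed with $\partial_x$) that is not among the tools in Section~\ref{sec2}; the paper sidesteps this entirely by accepting the inverse-inequality loss and paying it back with \eqref{ass1uu_h2}. Either route reaches the same Gr\"onwall inequality, but the paper's is the one that closes with the lemmas actually on hand.
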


\begin{proof}
The error evolution equation is derived by choosing the test function $\phi = \mathcal{E}_N$ in the weak formulation \eqref{eq:fsg} and the analysis leverages the commutativity of $\mathcal{D}^\alpha$ with $P_N$ (Lemma \ref{lem:frac_properties}). This yields
\begin{equation}\label{eq:est_phi}
    \begin{aligned}
        \frac{1}{2}\frac{d}{dt}\left(\|\mathcal{E}_{N}\|^2 + \|\mathcal{D}^{\frac{\alpha}{2}}\mathcal{E}_N\|^2\right)&= (P_Nu_t,\mathcal E_N) + (P_N(\mathcal{D}^{\frac{\alpha}{2}}u)_t, \mathcal{D}^{\frac{\alpha}{2}}\mathcal E_N) - \bigl((u_N)_t,\mathcal E_N\bigl)- \bigl((\mathcal{D}^{\frac{\alpha}{2}}u_N)_t, \mathcal{D}^{\frac{\alpha}{2}}\mathcal E_N\bigr)\\
        &=\bigl(f(u)-f(u_N),(\mathcal E_N)_x\bigr) -\kappa_2 \Bigl[ \bigl(\mathcal{D}^{\frac{\alpha}{2}} u^2_x-\mathcal{D}^{\frac{\alpha}{2}} (u^2_N)_x, \mathcal{D}^{\frac{\alpha}{2}}\mathcal E_N\bigr) \\&\qquad + \bigl(u \mathcal{D}^\alpha u_x-u_N \mathcal{D}^\alpha (u_N)_x, \mathcal E_N\bigr) \Bigr]\\ 
        &=\underbrace{\bigl(f'(u)(\mathcal E_N-(P_Nu-u)) + \tilde f''_u(u-U_N)^2,(\mathcal E_N)_x\bigr)}_{\mathcal I_1}\\&\qquad \underbrace{-\kappa_2 \Bigl[ \bigl(\mathcal{D}^{\frac{\alpha}{2}}(2u(\mathcal E_N-(P_Nu-u))-(u-u_N)^2)_x, \mathcal{D}^{\frac{\alpha}{2}}\mathcal E_N\bigr)}_{\mathcal I_2} \\&\qquad + \underbrace{\bigl((u-u_N)\mathcal D^\alpha u_x - (u-u_N)\mathcal{D}^\alpha(u-u_N)_x + u\mathcal{D}^\alpha(u-u_N)_x, \mathcal E_N\bigr)}_{I_3} \Bigr],
    \end{aligned}
\end{equation}
where the right-hand side is decomposed using the following Taylor expansion and algebraic identities
\begin{equation}
    \begin{aligned}
        f(u) - f(U_N) &= f'(u)(u-U_N) + \tilde f''_u(u-U_N)^2 = f'(u)(\mathcal E_N-(P_Nu-u)) + \tilde f''_u(u-U_N)^2,\\
        u^2 - u^2_N &= 2u(u-u_N)-(u-u_N)^2 = 2u(\mathcal E_N-(P_Nu-u))-(u-u_N)^2,\\
        u \mathcal{D}^\alpha u_x-u_N \mathcal{D}^\alpha (u_N)_x &= (u-u_N)\mathcal D^\alpha u_x - (u-u_N)\mathcal{D}^\alpha(u-u_N)_x + u\mathcal{D}^\alpha(u-u_N)_x,
    \end{aligned}
\end{equation}
where $\tilde{f''}_u=-\frac{3}{2}\gamma$ is the Taylor remainder.
Now, each term $\mathcal{I}_k$ is estimated using projection properties \eqref{Prop_Nr}, the \textit{a priori} assumption \eqref{prioriass} and \eqref{ass1uu_h2}, Cauchy-Schwarz inequality, smoothness of the exact solution $u$, and Young's inequality. Consequently for $\mathcal{I}_1$, we have 
\begin{equation*}
    \begin{aligned}
        \mathcal{I}_1 &\leq \|f'(u)-P_Nf'(u)\|_{\infty}\bigl(\|\mathcal E_N\|\|(\mathcal E_N)_x\|+\|P_Nu-u\|\|(\mathcal E_N)_x\|\bigr)\\&\quad+ \bigl(P_Nf'(u)(\mathcal E_N-(P_Nu-u)),(\mathcal E_N)_x\bigr) + C \|u-U_N\|_{\infty}\bigl(\|\mathcal E_N\|\|(\mathcal E_N)_x\|+\|P_Nu-u\|\|(\mathcal E_N)_x\|\bigr)\\&
        \leq CN^{-2r} + C\|\mathcal E_N\|^2,
    \end{aligned}
\end{equation*}
where we used the inverse inequality $\|(\mathcal{E}_N)_x\| \leq CN\|\mathcal{E}_N\|$ for $\mathcal{E}_N \in V_N$. Similar arguments imply
\begin{equation*}
    \begin{aligned}
        \mathcal{I}_2 &\leq C \|u-P_Nu\|_{\infty}\bigl(\|\mathcal E_N\|\|(\mathcal D^\alpha\mathcal E_N)_x\|+\|P_Nu-u\|\|(\mathcal D^\alpha\mathcal E_N)_x\|\bigr) +2\kappa_2 \bigl(P_Nu\mathcal{D}^{\frac{\alpha}{2}}\mathcal E_N, (\mathcal{D}^{\frac{\alpha}{2}}\mathcal E_N)_x\bigr)  \\& \quad -2\kappa_2 \bigl(P_Nu(P_Nu-u), (\mathcal{D}^{\alpha}\mathcal E_N)_x\bigr)+ C \|u-u_N\|_{\infty}\bigl(\|\mathcal E_N\|\|(\mathcal D^\alpha\mathcal E_N)_x\|+\|P_Nu-u\|\|(\mathcal D^\alpha\mathcal E_N)_x\|\bigr)\\& 
        \leq CN^{-2r} + C\|\mathcal E_N\|^2 + C\|\mathcal D^{\frac{\alpha}{2}}\mathcal E_N\|^2.
    \end{aligned}
\end{equation*}
\begin{equation*}
\begin{aligned}\label{eq:est_phii3}
        \mathcal{I}_3 & \leq \|\mathcal D^\alpha u_x\|_{\infty}\|u-u_N\|\|\mathcal E_N\| - \bigl((u-u_N)\mathcal{D}^{\alpha}(u-u_N)_x, \mathcal E_N\bigr) \\& \qquad
        +\bigl((u-P_Nu)\mathcal{D}^{\alpha}(u-u_N)_x, \mathcal E_N\bigr) +\bigl(P_Nu\mathcal{D}^\alpha(u-u_N)_x, \mathcal E_N\bigr) 
        \\&\leq 
        \|\mathcal D^\alpha u_x\|_{\infty}\bigl(\|\mathcal E_N\|^2+\|P_Nu-u\|\|\mathcal E_N\|\bigr) \\& \qquad+ C \|u-u_N\|_{\infty}\bigl(\|(\mathcal D^\alpha\mathcal E_N)_x\|\|\mathcal E_N\|+\|\mathcal{D}^{\alpha}(P_Nu-u)_x\|\|\mathcal E_N\|\bigr)\\& \qquad + C \|u-P_Nu\|_{\infty}\bigl(\|(\mathcal D^\alpha\mathcal E_N)_x\|\|\mathcal E_N\|+\|\mathcal{D}^{\alpha}(P_Nu-u)_x\|\|\mathcal E_N\|\bigr) -\bigl(P_Nu\mathcal{D}^{\frac{\alpha}{2}}\mathcal E_N, (\mathcal{D}^{\frac{\alpha}{2}}\mathcal E_N)_x\bigr) \\& \qquad + \bigl(P_Nu(P_Nu-u), (\mathcal{D}^{\alpha}\mathcal E_N)_x\bigr)
        \\&\leq 
        \|\mathcal D^\alpha u_x\|_{\infty}\bigl(\|\mathcal E_N\|^2+\|P_Nu-u\|\|\mathcal E_N\|\bigr) \\& \qquad+ 2C N^{-(\alpha+1)}\bigl(CN^{(\alpha+1)}\|\mathcal E_N\|^2+CN^{-r+(\alpha+1)}\|u\|_r\|\mathcal E_N\|\bigr) 
        \\& \leq CN^{-2r} + C\|\mathcal E_N\|^2.
    \end{aligned}
\end{equation*}
Substituting estimates on $\mathcal I_1$, $\mathcal I_2$, and $\mathcal I_3$  into \eqref{eq:est_phi} produces the differential inequality
\begin{equation}\label{eq:est_phi2}
    \begin{aligned}
        \frac{1}{2}\frac{d}{dt}\left(\|\mathcal{E}_{N}\|^2 + \|\mathcal{D}^{\frac{\alpha}{2}}\mathcal{E}_N\|^2\right) \leq CN^{-2r} + C\|\mathcal E_N\|^2 + C\|\mathcal D^{\frac{\alpha}{2}}\mathcal E_N\|^2.
    \end{aligned}
\end{equation}
An application of Gr\"onwall's inequality to \eqref{eq:est_phi2} over $[0,T]$ gives
\begin{equation*}
    \|\mathcal{E}_N(t)\|^2 + \|\mathcal{D}^{\alpha/2} \mathcal{E}_N(t)\|^2 \leq e^{CT}N^{-2r},  \qquad \forall t \in [0,T].
\end{equation*}
The triangle inequality and projection estimate \eqref{Prop_Nr} yield
\begin{align*}
    \|u - u_N\|_{L^2} \leq CN^{-r},
\end{align*}
which establishes \eqref{eq:L2_error_bound} for all $t\in [0,T]$. When \(u\) is analytic, using the analytic projection estimate \eqref{alpha_est2} in the estimates \eqref{eq:est_phi}-\eqref{eq:est_phii3} and similar analysis yields
\[
\frac{1}{2}\,\frac{\mathrm{d}}{\mathrm{d}t}\!\left(\|\mathcal{E}_{N}\|^2+\|\mathcal{D}^{\alpha/2}\mathcal{E}_{N}\|^2\right)\le C\mathrm{e}^{-2cN}+C\|\mathcal{E}_{N}\|^2+C\|\mathcal{D}^{\alpha/2}\mathcal{E}_{N}\|^2.
\]
An application of Gr\"onwall’s inequality over \([0,T]\) then shows
\[
\|\mathcal{E}_{N}(t)\|^2+\|\mathcal{D}^{\alpha/2}\mathcal{E}_{N}(t)\|^2 \le \mathrm{e}^{CT}\,\mathrm{e}^{-2cN},\qquad\forall\,t\in[0,T].
\]
Finally, by the triangle inequality and the analytic projection estimate (cf. Lemma \ref{Prop_Nr}) we obtain
\[
\|u(t)-u_{N}(t)\|_{L^2}
\le \|\mathcal{E}_{N}(t)\|_{L^2} + \|u(t)-P_{N}u(t)\|_{L^2}
\le C\mathrm{e}^{-cN},
\]
which establishes the exponential convergence claim and completes the proof.
\end{proof}
\begin{remark}\label{Ass_vald}
    The admissibility of the assumption \eqref{prioriass} is established via a continuity argument. For $r \geq \alpha + 2$ and $N > N_0$ sufficiently large, the constant $C = C(T, r, \|u\|_{C([0,T];H^r(I))})$ in the error estimate \eqref{eq:L2_error_bound}, which can be exactly determined by the final time $T$, satisfies $CN^{-r} < \frac{1}{2}N^{-(\alpha+3/2)}$. Define the critical time  
\[
t^* := \sup \left\{ \tau \in [0,T] : \|u(t) - u_N(t)\|_{L^2} \leq N^{-(\alpha+3/2)} \ \forall t \in [0,\tau] \right\}.
\]
By continuity, we would have  
\(
\|u(t^*) - u_N(t^*)\|_{L^2} = N^{-(\alpha+3/2)}.
\)
Now suppose, for contradiction, that $t^* < T$. Then the Theorem \ref{thm:spectral_accuracy} holds for $t \in [0,t^*]$, and it implies 
\[
\|u(t^*) - u_N(t^*)\|_{L^2} \leq CN^{-r} < \tfrac{1}{2}N^{-(\alpha+3/2)},
\]  
contradicting the equality above. Hence $t^* \geq T$, validating \eqref{prioriass} throughout $[0,T]$.
\end{remark}

\section{Numerical Experiments}
\label{sec5}

This section presents comprehensive numerical experiments for the fCH equation \eqref{eq:fch}.  The spatial discretization uses the FSG method \eqref{eq:fsg} with \(2N+1\) basis functions, while temporal integration employs a fourth-order \textit{Runge-Kutta} (RK4) method. The FSG scheme achieves spectral accuracy for smooth solutions (Theorem \ref{thm:spectral_accuracy}). For non-smooth solutions (e.g., peakons), we observe reduced convergence rates consistent with the limited solution regularity. All experiments use final time \(T \), with time steps $\Delta t \approx 0.001/k_{\max}$, where $k_{\max}$ is the maximum resolved wavenumber, to ensure temporal errors are negligible compared to spatial discretization errors. For the numerical experiments' purposes, we choose a convenient spatial domain according to the initial and boundary data.

% The semi-discrete FSG scheme \eqref{eq:fsg} yields the ODE system
% \[
% \frac{d\mathbf{U}}{dt} = \mathbf{g}(\mathbf{U}(t)), \quad \mathbf{U}(t) = [\hat{u}_{-N}(t), \dots, \hat{u}_N(t)]^T,
% \]
% solved using RK4
% \begin{align*}
% \mathbf{k}_1 &= \mathbf{g}(\mathbf{U}^n), \\
% \mathbf{k}_2 &= \mathbf{g}(\mathbf{U}^n + \frac{\Delta t}{2}\mathbf{k}_1), \\
% \mathbf{k}_3 &= \mathbf{g}(\mathbf{U}^n + \frac{\Delta t}{2}\mathbf{k}_2), \\
% \mathbf{k}_4 &= \mathbf{g}(\mathbf{U}^n + \Delta t\mathbf{k}_3), \\
% \mathbf{U}^{n+1} &= \mathbf{U}^n + \frac{\Delta t}{6}(\mathbf{k}_1 + 2\mathbf{k}_2 + 2\mathbf{k}_3 + \mathbf{k}_4).
% \end{align*}
Spectral accuracy is verified via \(L^2\)-error decay \(\|u - u_N\| \sim \mathcal{O}(e^{-cN})\) for smooth solutions. For non-smooth solutions, algebraic convergence \(\|u - u_N\| \sim \mathcal{O}(N^{-r})\) is expected, where \(r\) is actually a solution regularity.

\subsubsection{ Example 1: Smooth Traveling Wave}
\label{subsec:smooth_wave} This example validates the spectral accuracy of the FSG method \eqref{eq:fsg} for the fCH equation \eqref{eq:fch}. We consider the classical Camassa-Holm equation ($\alpha = 2$) with a smooth traveling wave solution \cite{xu2008local}. The exact solution is a smooth periodic traveling wave of the form $u(x,t) = \phi(x - ct)$, where the wave profile $\varphi(\xi)$ satisfies the ordinary differential equation
$$\varphi_{\xi\xi} = \varphi - \frac{3}{(\varphi - c)^2}$$
with initial conditions $\varphi(0) = 1$ and $\varphi_\xi(0) = 0$. The wave speed is $c = 3$, and the solution has a period $a \approx 6.4695$. The physical parameters are $(\kappa_1, \gamma, \kappa_2) = (0, 1, \frac{1}{3})$, corresponding to the standard Camassa-Holm nonlinearity \cite{xu2008local}.

The spatial domain is $[0, 10a]$ with periodic boundary conditions. The initial condition is $u_0(x) = \phi(x)$, and we solve until the final time $T = 1$. The FSG method \eqref{eq:fsg} is implemented with $N =  16, 32, 64, 128, 256$ Fourier modes. 

 Figure \ref{fig:smooth_results} shows the numerical solution compared with the exact solution at $T = 1$ for $N = 256$ modes and demonstrates exponential convergence, where both $L^2$ and $L^\infty$ errors decay exponentially with increasing $N$, as evidenced by the comparison with the exponential reference curve $\mathcal O(e^{-cN})$. The rapid error decay, with convergence orders increasing exponentially as shown in Table \ref{tab:smoothconvergence}, confirms the exponential accuracy of the method. 

\begin{figure}[h!]
\centering
\includegraphics[width=\textwidth, height=7cm]{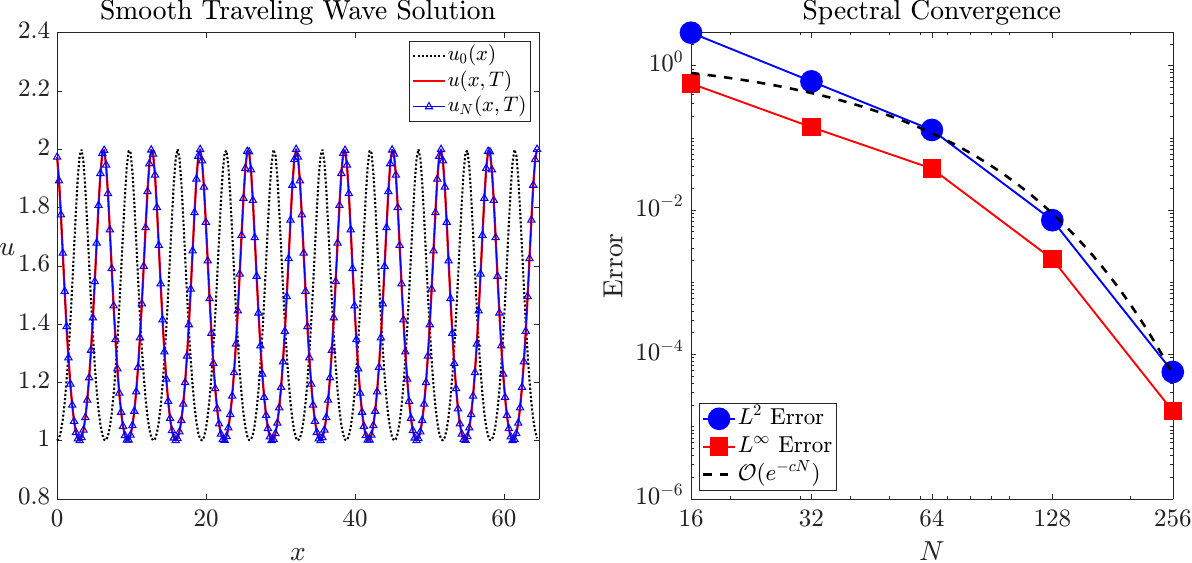}
\caption{
Numerical results for smooth traveling wave solution: (Left) Initial condition, exact solution, and numerical solution at $T=1$ with $N=256$; (Right) Exponential convergence of $L^2$ and $L^\infty$ errors with increasing $N$.
}
\label{fig:smooth_results}
\end{figure}

\begin{table}[h!]
\centering
\begin{tabular}{||c|c|c|c|c||}
\hline
$N$ & $L^2$--Error & Order & $L^\infty$--Error & Order \\
\hline\hline
$16$  & $2.853$             & --- & $5.643\times 10^{-1}$ & --- \\
$32$  & $6.041\times 10^{-1}$ & $2.24$ & $1.403\times 10^{-1}$ & $2.01$ \\
$64$  & $1.284\times 10^{-1}$ & $2.23$ & $3.749\times 10^{-2}$ & $1.90$ \\
$128$ & $7.225\times 10^{-3}$ & $4.15$ & $2.093\times 10^{-3}$ & $4.16$ \\
$256$ & $5.712\times 10^{-5}$ & $6.98$ & $1.640\times 10^{-5}$ & $7.00$ \\
\hline
\end{tabular}
\caption{Convergence analysis for smooth traveling wave solution $u(x,t) = \varphi(x-ct)$ at $T=1$ and spatial nodes $N$.}
\label{tab:smoothconvergence}
\end{table}

\subsubsection*{Example 2: Classical Camassa-Holm Peakon (\(\alpha = 2\))}
\label{ex:peakon_alpha2} The parameter set $(\kappa_1, \gamma, \kappa_2, \alpha) = (0, 1, \frac{1}{3}, 2)$ reduces the fCH equation \eqref{eq:fch} to the classical Camassa-Holm (CH) equation. This model admits {\it peakon} solutions - weakly singular traveling waves with discontinuous first derivatives that model shallow water wave breaking \cite{holden2006convergence}. 

 We consider the single peakon solution:
\begin{equation}\label{eqn:peaksoln}
u_0(x) = ce^{-|x - x_0|}, \quad c = 1, \quad x_0 = 25, \quad x \in [0, 50].
\end{equation}
The exact solution propagates rigidly: $u(x,t) = ce^{-|x - ct - x_0|}$. The discontinuity in $u_x$ at the peak $x = ct + x_0$ restricts the solution to $H^1(I)$, limiting convergence rates. Figure \ref{fig:ex1_profile} shows peakon propagation using $N = 1024$ modes and time $T=10$. Despite the derivative discontinuity, the FSG solution maintains the characteristic peak structure without spurious oscillations. Table \ref{tab:ex1_conv} quantifies convergence in the smooth region $|x - (ct + x_0)| > 1$. The observed $\mathcal{O}(N^{-1})$ convergence aligns with theoretical expectations for $H^1$ solutions. 

\begin{table}[htbp]
\centering
\begin{tabular}{||c|c|c|c|c||}
\hline
$N$ & $L^2$--Error & Order & $L^\infty$--Error & Order\\
\hline
\hline
512  & $4.2900\times 10^{-2}$ & --- & $3.9298\times 10^{-2}$ & ---\\
1024 & $2.0598\times 10^{-2}$ & 1.06 & $1.7498\times 10^{-2}$ & 1.17\\
2048 & $1.0166\times 10^{-2}$ & 1.02 & $8.2880\times 10^{-3}$ & 1.08\\
4096 & $5.0467\times 10^{-3}$ & 1.01 & $4.0551\times 10^{-3}$ & 1.03\\
8192 & $2.4893\times 10^{-3}$ & 1.02 & $1.9910\times 10^{-3}$ & 1.03\\
\hline
\end{tabular}
\caption{Spectral convergence for a smooth solution.  The table lists the $L^2$ and $L^\infty$ errors and the observed convergence order as the number of spatial nodes $N$ increases.}
\label{tab:ex1_conv}
\end{table}

\begin{figure}[h!]
\centering
\includegraphics[width=0.8\linewidth, height=7cm]{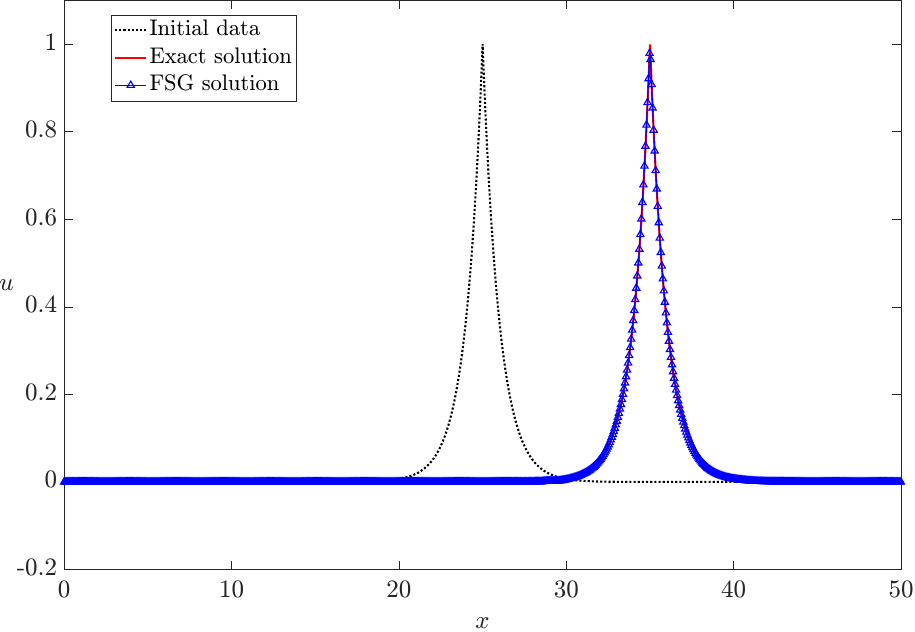}
\caption{Peakon propagation at \(t = 10\).}
\label{fig:ex1_profile}
\end{figure}

\subsubsection*{Example 3. Two and three-peakon interaction of Camassa-Holm equation} We consider the parameters $(\kappa_1,\gamma,\kappa_2,\alpha) = (0,1,\frac{1}{3},2)$ in \eqref{eq:fsg} with periodic boundary conditions on the domain $[0, L]$ where $L = 30$. The initial conditions consist of superposed peakon solutions of the form
\begin{equation}
\varphi_i(x) = \frac{c_i}{\cosh(L/2)} \min \left\{ \cosh \left( \min \left| x - x_i \right| \right), \cosh \left( L - \min \left| x - x_i \right| \right) \right\},
\end{equation}
periodized over the domain. For the two-peakon interaction with the initial condition
$$u_0(x) = \varphi_1(x) + \varphi_2(x),$$
we take parameters $(c_1,c_2)= (2,1)$ and $(x_1,x_2) = (-5,5)$
and for the three-peakon interaction with the initial condition
$$u_0(x) = \varphi_1(x) + \varphi_2(x) + \varphi_3(x),$$
 we take parameters $(c_1,c_2,c_3)= (2,1,0.8)$ and $(x_1,x_2,x_3) = (-5,-3,-1)$.
Numerical solutions are computed using the scheme \eqref{eq:fsg} with $N = 1024$ spatial points, time step $\Delta t =0.5\times 10^{-2}$, and RK4 time integration. Solutions are shown at times $t = \{0, 5, 12, 20\}$ for the two-peakon case and $t = \{0, 1, 2, 3\}$ for the three-peakon case.

Figures \ref{fig:2peakon_interaction} and \ref{fig:3peakon_interaction} show the evolution of the two peakon and three peakon solutions, respectively. In both cases, we observe that the peakon profiles maintain their shape during propagation and interact cleanly during collisions. The numerical scheme resolves the moving peaks and their nonlinear interactions without introducing significant dissipation or spurious oscillations, demonstrating the effectiveness of the spectral approach for solving the fCH equation \eqref{eq:fch} with non-smooth solutions.

\begin{figure}
\centering
\includegraphics[width=\linewidth, height=8cm]{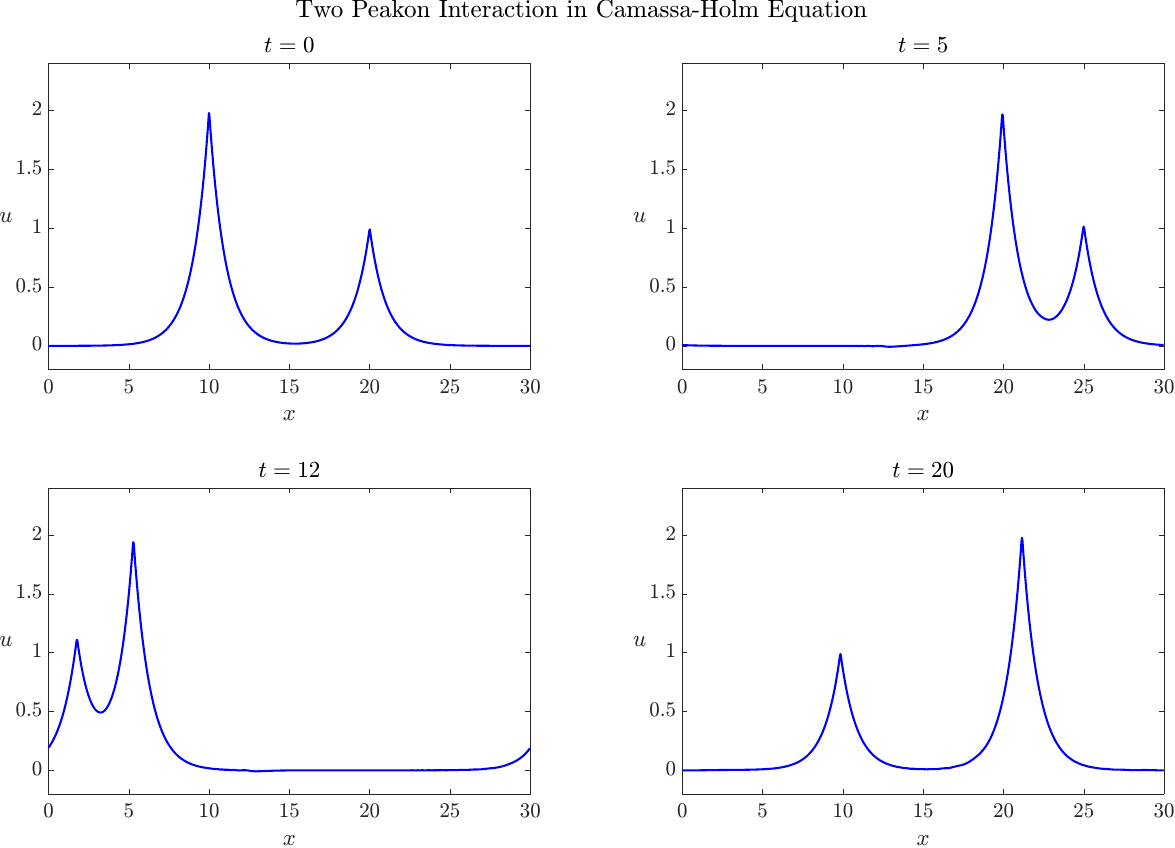}
\caption{Two-peakon interactions: $c_1 = 2$, $c_2 = 1$ at $t = 0, 5, 12, 20$ and nodes $N=1024$.}
\label{fig:2peakon_interaction}
\end{figure}

\begin{figure}
\centering
\includegraphics[width=\linewidth,height=8cm]{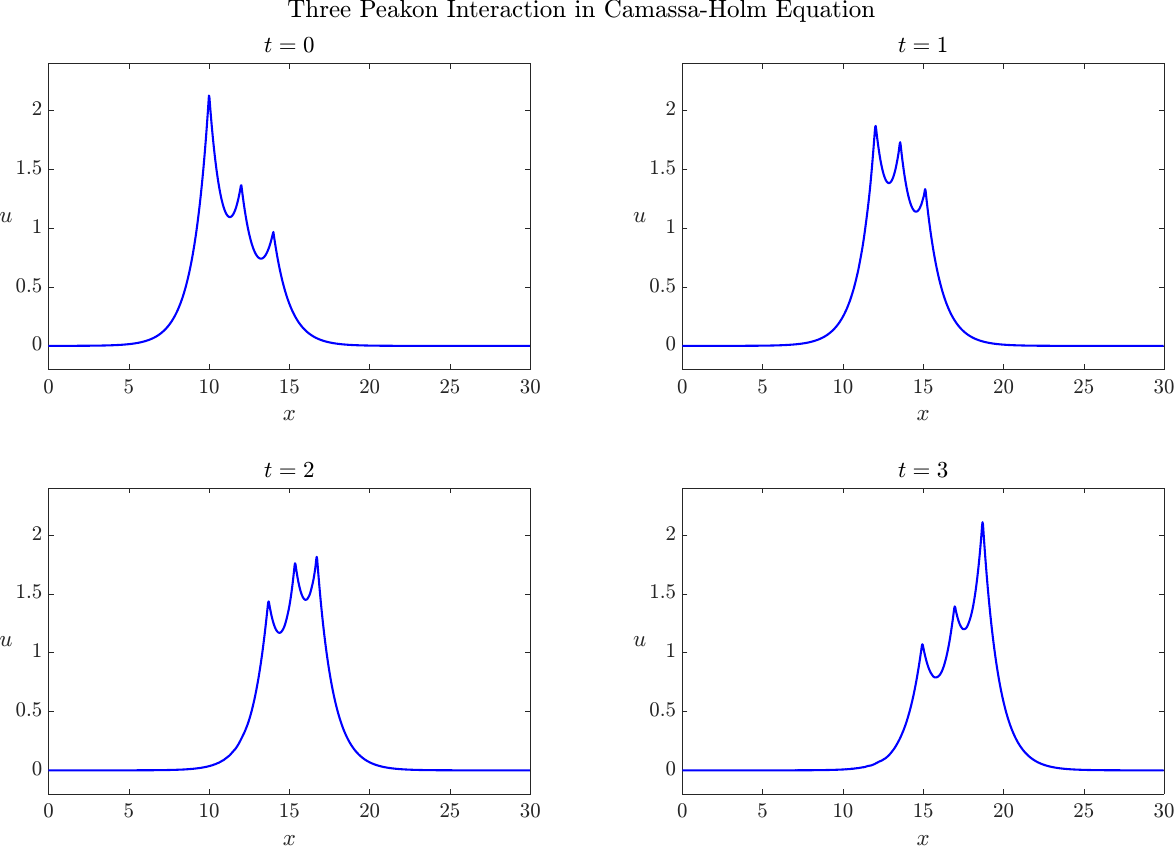}
\caption{Three-peakon interaction: $c_1 = 2$, $c_2 = 1$, $c_3 = 0.8$ at $t = 0, 1, 2, 3$, and nodes $N=1024$.}
\label{fig:3peakon_interaction}
\end{figure}

%%%%%%%%%%%

\subsubsection*{Example 4: Fractional Dispersion Effects ($1 \leq \alpha \leq 2$)}\label{exm:fracexamfch}
This experiment probes dispersion-modulated dynamics in the fCH equation \eqref{eq:fch} using $\alpha \in \{1.0, 1.4, 1.7, 2.0\}$ with fixed physical parameters ($\kappa_1 = 0$, $\gamma = 1$, $\kappa_2 = \frac{1}{3}$).

Numerical parameters: $L = 50$, $T = 10$, $\Delta t = 10^{-4}$, $N = 4096$. Initial peakon $u_0(x) = e^{-|x-25|}$ evolves as shown in Figure \ref{fig:ex2_profiles}. The scheme robustly captures the progressive amplitude decay and waveform broadening as $\alpha$ decreases, structural preservation at $\alpha = 2$ (classical limit), and smooth dispersion-nonlinearity transitions across $\alpha$ regimes, confirming consistent performance across fractional orders without stability degradation.

\begin{figure}[h!]
\centering
\includegraphics[width=\linewidth, height=8cm]{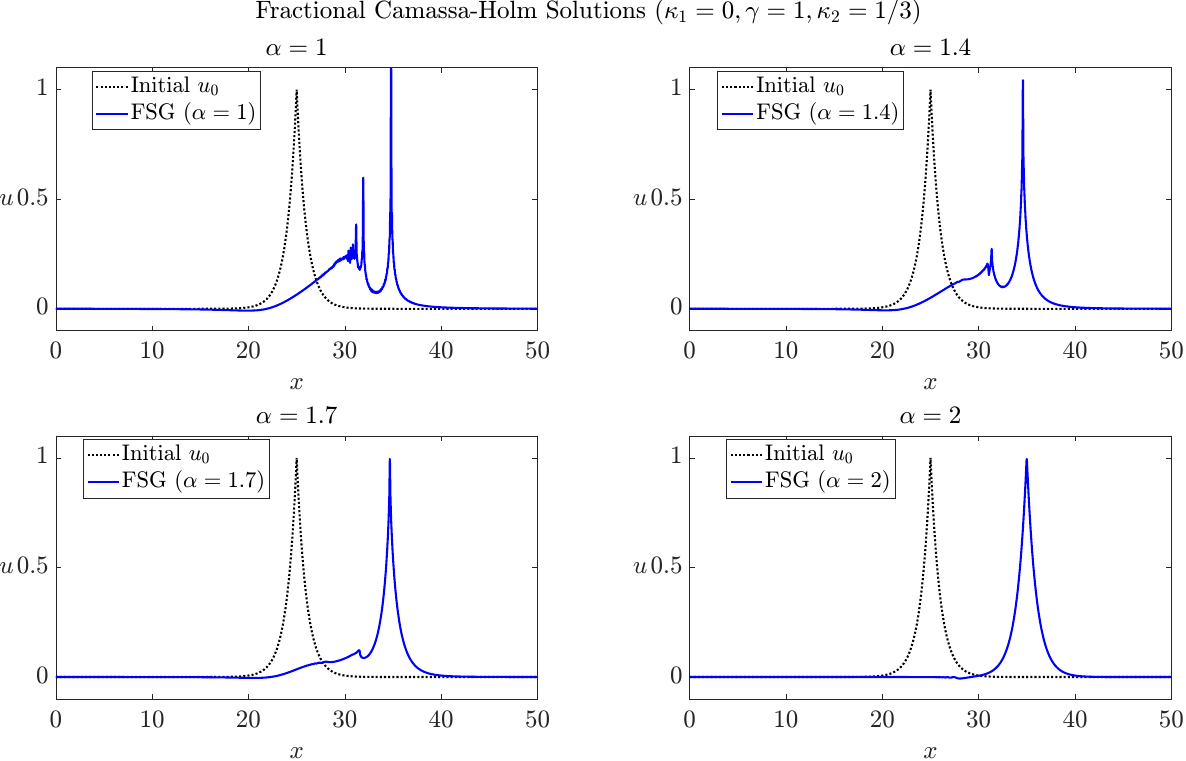}
\caption{Peakon evolution under fractional dispersion) at $T = 10$.}
\label{fig:ex2_profiles}
\end{figure}

\subsubsection*{Example 5: The fBBM Equation ($\alpha = 2$)}
\label{ex:bbm_classic}

This example considers the parameters $(\kappa_1,\gamma,\kappa_2,\alpha) = (1,\frac{1}{3},0,2)$ in the scheme \eqref{eq:fsg} which corresponds to the  classical Benjamin-Bona-Mahony (BBM) equation. The exact solitary wave solution is given by \cite{fabien2021high}
\[
u(x,t) = 3(c_s - 1) \sech\left[\frac{1}{2}\sqrt{\frac{c_s-1}{c_s}}(x - x_0 - c_s t)\right]^2
\]
with wave speed $c_s = 2$ and initial position $x_0 = -60$. The computational domain is $[-100, 100]$ with final simulation time $T = 50$, positioning the wave at $x = 40$.  

Numerical parameters include $N = 32$ to $512$ Fourier modes. Figure \ref{fig:bbm_convergence} (left) demonstrates excellent agreement between numerical and exact solutions, while Figure \ref{fig:bbm_convergence} (right) shows spectral convergence with exponential error decay. Table \ref{tab:bbm_convergence} demonstrates exponential convergence, with error reduction exceeding $\mathcal{O}(e^{-cN})$ as $N$ increases. This confirms the method's capability to accurately capture nonlinear wave phenomena while maintaining conservation properties.

\begin{figure}[h!]
\centering
\includegraphics[width=\linewidth, height=7cm]{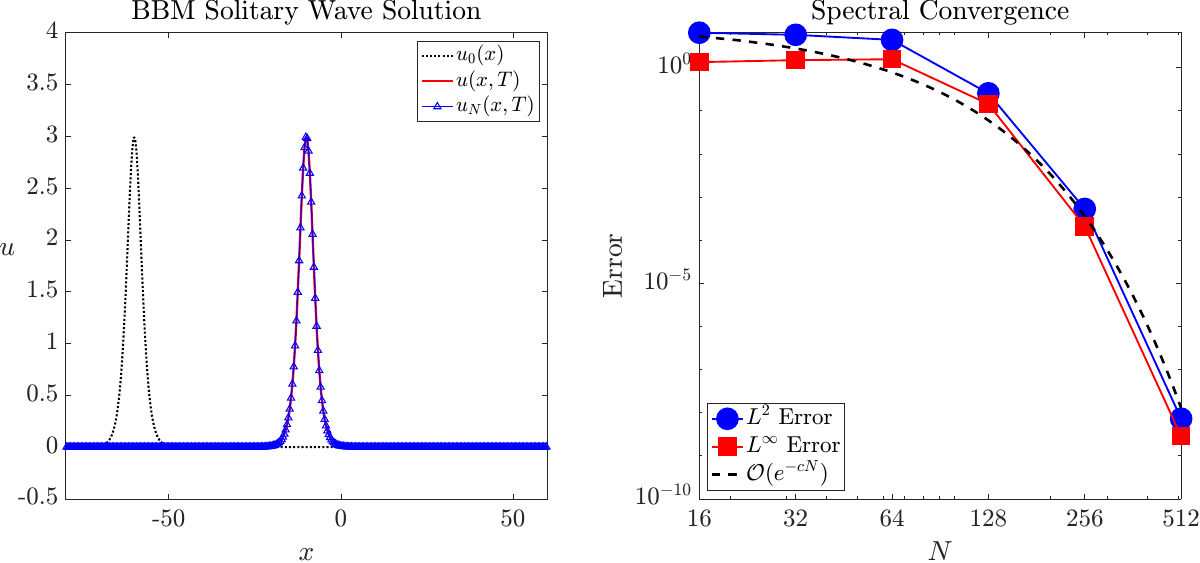}
\caption{(Left) Solitary wave solution at $T=50$ showing exact solution and numerical approximation ($N=2048$). (Right) Spectral convergence of $L^2$ and $L^\infty$ errors.}
\label{fig:bbm_convergence}
\end{figure}

\begin{table}[h]
\centering
\begin{tabular}{||c|c|c|c|c||}
\hline
$N$ & $L^2$--Error & Order & $L^\infty$--Error & Order \\
\hline
\hline
32 & $5.745 \times 10^{0}$ & --- & $1.486 \times 10^{0}$ & --- \\
64 & $4.387 \times 10^{0}$ & 0.39 & $1.562 \times 10^{0}$ & -0.07 \\
128 & $2.530 \times 10^{-1}$ & 4.12 & $1.423 \times 10^{-1}$ & 3.46 \\
256 & $5.282 \times 10^{-4}$ & 8.90 & $2.076 \times 10^{-4}$ & 9.42 \\
512 & $7.209 \times 10^{-9}$ & 16.16 & $2.846 \times 10^{-9}$ & 16.15 \\
\hline
\end{tabular}
\caption{Convergence analysis for BBM solitary wave ($\alpha=2$).}
\label{tab:bbm_convergence}
\end{table}

\section{Conclusion}\label{sec6}
We have developed and analyzed a Fourier spectral Galerkin method for the fractional Camassa–Holm equation with periodic boundary conditions.  By representing the fractional derivative exactly through Fourier multipliers, the semi–discrete scheme inherits the mass and energy conservation laws of the continuous problem.  In the simplified fractional Benjamin–Bona–Mahony setting, we established existence, uniqueness, and convergence of the numerical solution.  For general fractional Camassa–Holm equations, we derived optimal error bounds: when the initial data lies in $H^r(I)$ with $r\ge\alpha+2$, the $L^2$–error decays algebraically like $N^{-r}$; for analytic (hence smooth) solutions, the error decays exponentially like $\mathrm{e}^{-cN}$, confirming the spectral accuracy of the method. The numerical experiments corroborated these theoretical findings. Smooth travelling–wave solutions exhibited the expected rapid, nearly machine–precision convergence, while peakon solutions, whose regularity is limited to $H^1$, showed the anticipated algebraic decay.  

Future research will extend this framework in several directions.  One goal is to adapt the spectral method to higher–dimensional generalizations such as the Camassa–Holm–Kadomtsev–Petviashvili equation to model two–dimensional wave dynamics.  For non–periodic or irregular domains, mesh-free approaches such as radial basis function discretizations could provide a flexible alternative.  Additional avenues include designing structure–preserving time–integration schemes for long–time simulations, developing adaptive refinement strategies to resolve singularities, and incorporating variable–order fractional operators to model heterogeneous media.  Altogether, the present work lays a rigorous foundation for high–precision numerical exploration of a broad class of nonlinear wave phenomena.

\section*{Declaration} 
We have not used any data to conduct this work. The authors declare that this work does not have any conflicts of interest.
\bibliographystyle{abbrv}
\bibliography{main}

\end{document}